\newtheorem{assumption}{Assumption}
\begin{document}

\title{Uniform convergence of finite element method on Bakhvalov-type mesh for a 2-D singularly perturbed convection-diffusion problem with exponential layers
\thanks{The current research  was partly supported by NSFC (11771257), Shandong Provincial NSF (ZR2021MA004).}
}

\titlerunning{Finite element method on Bakhvalov-type mesh}        

\author{Jin Zhang         \and
        Chunxiao Zhang 
}


\institute{Corresponding author:  Jin Zhang \at
              School of Mathematics and Statistics, Shandong Normal University,
Jinan 250014, China\\
              \email{jinzhangalex@sdnu.edu.cn}           
           \and
           Chunxiao Zhang\at
           School of Mathematics and Statistics, Shandong Normal University,
Jinan 250014, China\\
            \email{chunxiaozhangang@outlook.com}  
}

\date{Received: date / Accepted: date}

\maketitle

\begin{abstract}
On Bakhvalov-type mesh, uniform convergence analysis of finite element method for a 2-D singularly perturbed convection-diffusion problem with exponential layers is still an open problem. Previous attempts have been unsuccessful. The primary challenges are the width of the mesh subdomain in the layer adjacent to the transition point, the restriction of the Dirichlet boundary condition, and the structure of exponential layers. To address these challenges, a novel analysis technique is introduced for the first time, which takes full advantage of the characteristics of interpolation and the connection between the smooth function and the layer function on the boundary. Utilizing this technique in conjunction with a new interpolation featuring a simple structure, uniform convergence of optimal order $k+1$ under an energy norm can be proven for finite element method of any order $k$. Numerical experiments confirm our theoretical results.
\keywords{Singularly perturbed \and Convection–diffusion \and Bakhvalov-type mesh  \and Finite element method \and Uniform convergence}
\subclass{ 65N12  \and 65N30}
\end{abstract}
%
%
%
\section{Introduction}	
Take the following elliptic boundary value problem into consideration:
\begin{equation}\label{model problem}
	\begin{aligned}
		-\varepsilon\Delta u-\boldsymbol{b}\cdot\nabla u+cu=&f\quad&&\text{in}\quad\Omega=(0,1)^2,\\
		u=&0\quad&&\text{on}\quad\partial\Omega,
	\end{aligned}
\end{equation}
where $0<\varepsilon\ll1$ is a small positive constant, and $\boldsymbol{b}$ denotes $(b_1(x,y),b_2(x,y))$. Let $b_1$, $b_2$, and $f$ be sufficient smooth functions satisfying
\begin{equation}\label{ensure coercivity}
	b_1(x,y)\ge\beta_1>0,\ b_2(x,y)\ge\beta_2>0,\ c(x,y)+\frac{1}{2}\nabla\cdot\boldsymbol{b}(x,y)\ge\gamma>0\ \text{on}\ \Omega,
\end{equation}
with some positive constants $\beta_1$, $\beta_2$ and $\gamma$. Under the given conditions, equation \eqref{model problem} possesses a unique solution in the space $H_0^1(\Omega)\cap H^2(\Omega)$ for all $f\in L^2(\Omega)$ \cite{Roo1Sty2:2008-Robust}. This solution exhibits exponential boundary layers of width $\mathcal{O}(\varepsilon\ln(1/\varepsilon))$ at $x=0$ and $y=0$ as well as a corner layer at $(0,0)$. Since the diffusion parameter $\varepsilon$ can be arbitrarily small, this problem is characterized as singularly perturbed and is convection-dominated.  

Studying such singularly perturbed convection-diffusion problems has significant theoretical and practical value. In order to obtain the required accuracy of the numerical solution, various uniformly convergent numerical methods with respect to the small parameter $\varepsilon$ were proposed (see \cite{Roo1Sty2:2008-Robust}). Among them,  the combination of finite element method  and layer-adapted meshes is particularly effective. Shishkin-type mesh and Bakhvalov-type mesh are two popular types of layer-adapted meshes \cite{Lin:2009-Layer}. Shishkin-type mesh has a simple structure and has been widely used to study convergence (see \cite{Li1Nav2:1998-Uniformly,Lin1:2000-Uniform,Zhang1:2003-Finite,Roo1Sch2:2015-Convergence,Lin1Sty2:2012-Balanced,Zhang1Liu2:2016-Optimal,Liu1Zhang2:2018-Uniform} and their references therein). In contrast to Shishkin-type mesh, Bakhvalov-type mesh has a complex structure as it is graded in the mesh subdomain used to resolve layers. Nevertheless, its transition point is independent of the mesh parameter $N$, which is an essential property in certain cases. Furthermore, Bakhvalov-type mesh performs numerically better than Shishkin-type mesh, particularly when higher-order finite element schemes are employed.

Up to this point, uniform convergence analysis for singularly perturbed problems on Bakhvalov-type mesh is still an open problem, which is attributed to the intricate structure of the mesh (see \cite[Question 4.1]{Roo1Sty2:2015-Some} for further information). In an effort to deal with the problems raised in \cite[Question 4.1]{Roo1Sty2:2015-Some}, several convergence studies utilizing finite element method have been carried out. In the case of 1D, Roos \cite{Roo1:2006-Error} introduced a quasi-interpolation for linear finite element method and proved uniform convergence of optimal order, while this approach cannot be generalized to higher-order finite element schemes or higher-dimensional cases. Subsequently, Zhang \cite{zhang1Liu2:2020-Optimal} developed a novel interpolation with a simplified structure suitable for finite element method of any order. In the case of 2D, Zhang \cite{zhang1Liu2:2023-Convergence} established uniform convergence for a singularly perturbed convection-diffusion problem with parabolic layers by extending the novel interpolation in \cite{zhang1Liu2:2020-Optimal} to the case of 2D. However, when considering a 2-D singularly perturbed convection-diffusion problem with exponential layers, previous research, including \cite{Roo1Sch2:2012-Analysis} and \cite{zhang1Liu2:2023-Convergence}, both failed to achieve uniform convergence. The main difficulties come from the special width of the Bakhvalov-type mesh subdomain in the layer adjacent to the transition point, the restriction of the Dirichlet boundary condition, and the structure of exponential layers. These difficulties result in suboptimal results when using the standard error analysis methods to estimate some convection terms of the layer part, and this situation does not arise in parabolic layers.

This article aims to present a novel analysis technique for solving the 2-D singularly perturbed convection-diffusion problem with exponential layers. To be specific, for the smooth part in particular regions, we convert the corresponding estimates in the two-dimensional case into several types of one-dimensional interpolation error estimates. Furthermore, we make full use of the relationship between the smooth function and the layer function on the boundary. For the layer part, we apply a new interpolation with a simplified structure that were proposed in \cite{zhang1Liu2:2023-Convergence} and adopt the standard error analysis approach. Our scheme is demonstrated to uniformly converge to an optimal order of $k+1$. To the best of our knowledge, this is the first proof of uniform convergence for a typical problem \eqref{model problem} on Bakhvalov-type mesh in the two-dimensional setting. It is worth mentioning that the novel analysis technique proposed in this paper provides a new path for the future convergence analysis.

The structure of this article is as follows: In Section \ref{sec 2}, we present the regularity of the solution to \eqref{model problem}, define a Bakhvalov-type mesh, and introduce the $k$-th order finite element method. New interpolation and preliminary results are displayed in Section \ref{sec 3}. Detailed derivations of uniform convergence under an energy norm can be found in Section \ref{sec 4}. Section \ref{sec 5} carries out the numerical experiments that illustrate our theoretical results.

Assume that $D$ is any measurable subset of $\Omega$. $(\cdot,\cdot)_D$, $\Vert\cdot\Vert_{L^1(D)}$, $\Vert\cdot\Vert_{\infty,D}$, $\Vert\cdot\Vert_{D}$ and $\vert\cdot\vert_{1,D}$ represent the standard inner product in $L^2(D)$, the standard norms in $L^1(D)$, $L^\infty(D)$, $L^2(D)$ and the standard seminorms in $H^1(D)$, respectively. When $D=\Omega$, drop $D$ from these notations for the sake of clarity. In addition, all constants used in this paper, including the generic constant $C$ and the fixed constant $C_i$, are all positive and unaffected
by the singular perturbation $\varepsilon$ as well as the mesh parameter N.

\section{Regularity, Bakhvalov-type mesh and finite element method}\label{sec 2}
\subsection{Regularity of the solution}
In the process of analysis, we assume that $k\ge1$ is a fixed integer.
\begin{assumption}\label{bound}
	We can decompose the solution to \eqref{model problem} into the following parts:
	\begin{equation}\label{decomposition of u}
		u=S+E_{1}+E_{2}+E_{12}\quad \forall(x,y)\in\overline{\Omega},
	\end{equation}
	where $S$ is the smooth part, $E_{1}$ is the exponential layer part at $x=0$, $E_{2}$ is another exponential layer part at $y=0$, and $E_{12}$ is the corner layer part.
	
	For any $(x,y)\in \bar{\Omega}$, the following bounds hold:
	\begin{subequations}
		\begin{align}
			\left|\frac{\partial^{m+n}S}{\partial x^{m}\partial y^{n}}(x,y) \right| &\le C,\\
			\left|\frac{\partial^{m+n}E_{1}}{\partial x^{m}\partial y^{n}}(x,y) \right| &\le C\varepsilon^{-m}e^{\frac{-\beta_1 x}{\varepsilon}},\\
			\left|\frac{\partial^{m+n}E_{2}}{\partial x^{m}\partial y^{n}}(x,y) \right| &\le C\varepsilon^{-n}e^{\frac{-\beta_2y}{\varepsilon}},\\
			\left|\frac{\partial^{m+n}E_{12}}{\partial x^{m}\partial y^{n}}(x,y) \right| &\le C\varepsilon^{-(m+n)}e^{\frac{-\beta_1 x-\beta_2 y}{\varepsilon}},
		\end{align}
	\end{subequations}
	where $m$, $n$ are nonnegative integers with $0\le m+n\le k+1$.
\end{assumption}
We can refer to \cite{Lin1Sty2:2001-Asymptotic} and \cite[Sec. 7]{Sty:2005-Steady} for concrete information regarding Assumption \ref{bound}.

\subsection{Bakhvalov-type mesh}
Bakhvalov mesh was first proposed in \cite{Bakh:1969-Optimization} to improve the convergence order. Bakhvalov-type mesh, as an approximation of Bakhvalov mesh, has been widely used to avoid solving nonlinear equations appearing on Bakhvalov mesh \cite{Lin:2009-Layer}.

In this study, we consider a Bakhvalov-type mesh introduced in \cite{Lin:2009-Layer}, which is defined by
\begin{equation}\label{Bakhvalov-type mesh}
	\begin{aligned}
		&x_i=\left\{\begin{aligned}
			&-\frac{\sigma\varepsilon}{\beta_1}\ln(1-2(1-\varepsilon)i/N)\quad&&\text{for} \quad i=0,1,\dots,N/2,\\
			&1-2(1-x_{N/2})(N-i)/N\quad&&\text{for}\quad i=N/2+1,\dots,N,
		\end{aligned}
		\right.\\
		&y_j=\left\{\begin{aligned}
			&-\frac{\sigma\varepsilon}{\beta_2}\ln(1-2(1-\varepsilon)j/N)\quad&&\text{for}\quad j=0,1,\dots,N/2,\\
			&1-2(1-y_{N/2})(N-j)/N\quad&&\text{for}\quad j=N/2+1,\dots,N,
		\end{aligned}
		\right.
	\end{aligned}
\end{equation}
where $\sigma\ge k+1$ and $N$ is an even positive integer. Transition points are $x_{N/2}=-\frac{\sigma\varepsilon}{\beta_1}\ln\varepsilon$ and $y_{N/2}=-\frac{\sigma\varepsilon}{\beta_2}\ln\varepsilon$, indicating a shift in mesh from coarse to fine. Also, one has $e^{-\beta_1x_{N/2}/\varepsilon}=\varepsilon^\sigma$ and $e^{-\beta_1x_{N/2-1}/\varepsilon}=N^{-\sigma}$.

By connecting the mesh points $\left\lbrace(x_i,y_j)|i,j=0,1,\dots,N\right\rbrace$ with lines parallel to the x-axis and the y-axis, we obtain a rectangulation of $\Omega$ denoted as $\mathcal{T}_N$ ( see Figure \ref{fig:paper3-rectangulation}). We define $K_{i,j}=[x_{i},x_{i+1}]\times[y_{j},y_{j+1}]$ as a specific mesh rectangle, where $i,j=0,1,\dots,N-1$, and $K\in\mathcal{T}_N$ as a general mesh rectangle. The lengths of $K_{i,j}$ in the $x$- and $y$-directions are represented by $h_{x,i}:=x_{i+1}-x_i$ and $h_{y,j}:=y_{j+1}-y_j$, respectively.
\begin{figure}
	\centering
	\includegraphics[width=0.7\linewidth]{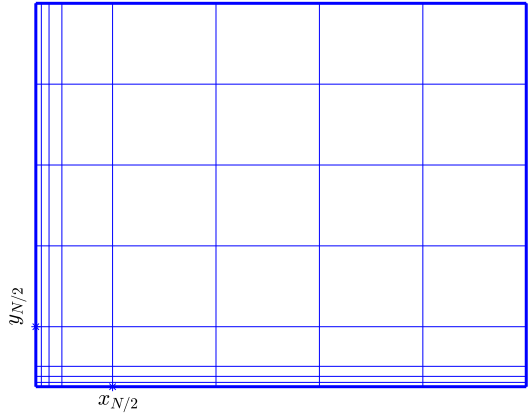}
	\caption{Rectangulation of $\Omega$}
	\label{fig:paper3-rectangulation}
\end{figure}

\begin{assumption}\label{restriction}
	In this paper, suppose that $\varepsilon\le N^{-1}$, because it is not a practical restriction.
\end{assumption}

In the following lemma, we will present some important properties of Bakhvalov-type mesh \eqref{Bakhvalov-type mesh} (refer to \cite[Lemma 3]{zhang1Liu2:2020-Optimal}).
\begin{lemma}\label{step}
	Let Assumption \ref{restriction} hold, then for Bakhvalov-type mesh \eqref{Bakhvalov-type mesh}, one can obtain
	\begin{align}
		&C_1\varepsilon N^{-1}\le h_{x,0}\le C_2\varepsilon N^{-1},\\
		&h_{x,0}\le h_{x,1}\le\dots\le h_{x,N/2-2},\\
		&\frac{1}{4}\sigma\varepsilon\le h_{x,N/2-2}\le \sigma\varepsilon,\\
		&\frac{1}{2}\sigma\varepsilon\le h_{x,N/2-1}\le 2\sigma N^{-1},\\
		&N^{-1}\le h_{x,i}\le 2N^{-1}\quad N/2\le i\le N-1,\\
		&C_3\sigma\varepsilon\ln N\le x_{N/2-1}\le C_4\sigma\varepsilon\ln N,\quad x_{N/2}\ge C\sigma\varepsilon|\ln\varepsilon|,\\
		&h_{x,i}^{\rho}e^{-\beta_1x/\varepsilon}\le C\varepsilon^{\rho}N^{-\rho}\quad 0\le i\le N/2-2\quad\text{and}\quad 0\le\rho\le\sigma.
	\end{align}
Bounds for $h_{y,j}$ $0\le j\le N-1$, are analogous.
\end{lemma}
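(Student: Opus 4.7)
The approach is direct computation from the explicit formula \eqref{Bakhvalov-type mesh}. All statements reduce to manipulations of
\begin{equation*}
h_{x,i} = \frac{\sigma\varepsilon}{\beta_1}\,\ln\!\frac{a_i}{a_{i+1}}, \qquad 0\le i\le N/2-1, \quad \text{where } a_i:=1-2(1-\varepsilon)\,i/N,
\end{equation*}
together with the closed form $e^{-\beta_1 x_i/\varepsilon}=a_i^\sigma$ and the elementary sandwich $t/(1+t)\le\ln(1+t)\le t$ for $t\ge 0$.

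First I would handle the easier bookkeeping items. For the bound on $h_{x,0}$ I would apply the sandwich to $t=(a_0-a_1)/a_1=2(1-\varepsilon)/(Na_1)$; Assumption \ref{restriction} keeps $t$ between positive universal constants, yielding $h_{x,0}\asymp\varepsilon N^{-1}$. Monotonicity follows because the ratio $a_i/a_{i+1}=1+2(1-\varepsilon)/(Na_{i+1})$ is strictly increasing in $i$ on $0\le i\le N/2-2$. For the bounds on $h_{x,N/2-2}$ and $h_{x,N/2-1}$ I would substitute $i=N/2-2$ and $i=N/2-1$; under Assumption \ref{restriction} the values $a_{N/2-1}=2/N+\varepsilon(1-2/N)$ and $a_{N/2}=\varepsilon$ give explicit evaluations, with the upper bound $h_{x,N/2-1}\le 2\sigma N^{-1}$ relying on the observation that $\varepsilon\mapsto\varepsilon\ln(c/(\varepsilon N))$ attains its maximum on $(0,1/N]$ at $\varepsilon=c/(eN)$. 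The uniform spacing on the coarse part and the estimates on $x_{N/2-1}$ and $x_{N/2}$ then follow immediately from \eqref{Bakhvalov-type mesh}.

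The main obstacle is the weighted bound $h_{x,i}^\rho e^{-\beta_1 x/\varepsilon}\le C\varepsilon^\rho N^{-\rho}$. Since the exponential is decreasing and $x\ge x_i$ on $K_{i,j}$, it suffices to bound
\begin{equation*}
h_{x,i}^\rho\,a_i^\sigma = \left(\frac{\sigma\varepsilon}{\beta_1}\right)^{\!\rho}\!\left[\ln\!\frac{a_i}{a_{i+1}}\right]^\rho a_i^\sigma.
\end{equation*}
I would use $\ln(a_i/a_{i+1})\le(a_i-a_{i+1})/a_{i+1}=2(1-\varepsilon)/(Na_{i+1})$ so that the bracketed factor becomes $(2(1-\varepsilon)/N)^\rho a_{i+1}^{-\rho}$. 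The remaining task is to control $a_i^\sigma a_{i+1}^{-\rho}=(a_i/a_{i+1})^\rho\,a_i^{\sigma-\rho}$. For $0\le i\le N/2-2$ one has $a_{i+1}\ge a_{N/2-1}\ge 2/N$, so $a_i/a_{i+1}=1+2(1-\varepsilon)/(Na_{i+1})\le 2$; combined with $a_i^{\sigma-\rho}\le 1$ since $\sigma\ge\rho$, this yields $h_{x,i}^\rho a_i^\sigma\le C\varepsilon^\rho N^{-\rho}$ uniformly in $i$ and $\rho$. The delicate point is ensuring that the constant $C$ does not depend on $\rho$, which is exactly what the cancellation $(a_i/a_{i+1})^\rho\le 2^\rho$ combined with absorption into the exponent bound provides when one carries the argument through carefully. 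The $y$-direction analog follows by the symmetric relabeling $(x,\beta_1,i)\leftrightarrow(y,\beta_2,j)$.
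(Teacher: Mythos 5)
The paper does not prove Lemma \ref{step} itself; it defers to \cite[Lemma 3]{zhang1Liu2:2020-Optimal}. Your direct computation, working with $a_i=1-2(1-\varepsilon)i/N$, the identity $e^{-\beta_1 x_i/\varepsilon}=a_i^\sigma$, and the elementary sandwich $t/(1+t)\le\ln(1+t)\le t$, is the standard route and is sound. In particular, the weighted bound $h_{x,i}^\rho e^{-\beta_1 x/\varepsilon}\le C\varepsilon^\rho N^{-\rho}$ is handled correctly: monotonicity of the exponential reduces the task to bounding $h_{x,i}^\rho a_i^\sigma$, and the chain $a_{i+1}\ge a_{N/2-1}\ge 2/N$, $a_i/a_{i+1}\le 2$, $a_i^{\sigma-\rho}\le 1$ closes the estimate; since $\rho$ ranges over the compact interval $[0,\sigma]$, the $\rho$-dependent factors $2^\rho$ and $(\sigma\varepsilon/\beta_1)^\rho$ are absorbed into a single constant. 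Two small slips in the write-up, neither of which affects the conclusions: for $h_{x,0}$ it is $Nt$, not $t$, that is pinned between positive universal constants (indeed $t=2(1-\varepsilon)/(Na_1)\asymp N^{-1}\to 0$, so $\ln(1+t)\asymp t\asymp N^{-1}$, whence $h_{x,0}\asymp\varepsilon N^{-1}$); and the interior critical point $\varepsilon=c/(eN)$ of $\varepsilon\mapsto\varepsilon\ln(c/(\varepsilon N))$ lies outside $(0,1/N]$ when $c>e$, so for the relevant $c\approx 3$ the maximum on the admissible range is attained at the endpoint $\varepsilon=1/N$, which still produces the claimed $h_{x,N/2-1}\le 2\sigma N^{-1}$.
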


\subsection{Finite element method}
The variational form of \eqref{model problem} can be described as
\begin{equation}\label{variational form 1}
	\left\{
	\begin{aligned}
		&\text{find}\ u\in H_0^1(\Omega) \ \text{such that for all}\ v\in H_0^1(\Omega)\\
		&a(u,v)=(f,v),
	\end{aligned}
	\right.
\end{equation}
where 
\begin{equation}
	a(u,v):=\varepsilon(\nabla u,\nabla v)+(-\boldsymbol{b}\cdot\nabla u,v)+(cu,v).
\end{equation}

Define a finite element space $V^N\subset H_0^1(\Omega)$ on $\mathcal{T}_N$:
\begin{equation*}
	V^N:=\left\lbrace v^N\in C(\overline{\Omega}):\ v^N|_{\partial\Omega}=0\ \text{and}\ v^N|_K\in\mathcal{Q}_k(K)\quad \forall K\in\mathcal{T}_N\right\rbrace,
\end{equation*}
with $\mathcal{Q}_k(K)=\text{span}\left\lbrace x^iy^j:\ 0\le i,j\le k\right\rbrace$. 

Then the $k$-th order finite element method is
\begin{equation}\label{variational form 2}
	\left\{\begin{aligned}
		&\text{find}\ u^N\in V^N \ \text{such that for all}\ v\in V^N\\
		&a(u^N,v^N)=(f,v^N),
	\end{aligned}
	\right.
\end{equation}
where 
\begin{equation}\label{variation}
	a(u^N,v^N):=\varepsilon(\nabla u^N,\nabla v^N)+(-\boldsymbol{b}\cdot\nabla u^N,v^N)+(cu^N,v^N).
\end{equation}

Condition \eqref{ensure coercivity} gives the coercivity
\begin{equation}\label{coercivity}
	a(v^N,v^N)\ge C\Vert v^N\Vert_\varepsilon^2\quad\forall v^N\in V^N,
\end{equation}
here the energy norm $\Vert\cdot\Vert_\varepsilon$ is defined as
\begin{equation}
	\Vert v\Vert_\varepsilon^2:=\left(\varepsilon|v|_1^2+\Vert v\Vert^2\right)^{1/2}.
\end{equation}

The following Galerkin orthogonality property can be obtained by \eqref{variational form 1} and \eqref{variational form 2}:
\begin{equation}\label{orthogonality}
	a(u-u^N,v^N)=0\quad\forall v^N\in V^N.
\end{equation}

Furthermore, by employing the Lax-Milgram lemma, existence and uniqueness of the finite element solution $u^N$ can be well demonstrated \cite{Bre1Sco2:2008-Mathematical}.
\section{New interpolation, interpolation errors and some preliminary results}\label{sec 3}
In this section, we will introduce a novel interpolation that were proposed in \cite{zhang1Liu2:2023-Convergence}. Interpolation errors and preliminary results are also presented. To begin with, some notations need to be defined: For $k\ge1$, set $x_i^s:=x_i+(s/k)h_{x,i}$ and $y_j^t:=y_j+(t/k)h_{y,j}$, where $i,j=0,1,\dots,N-1$ and $s,t=0,1,\dots,k-1$. Besides, set $x_N^0=x_N$ and $y_N^0=y_N$ to ensure notational uniformity. For any $v\in C^0(\overline{\Omega})$, its standard Lagrange interpolation $v^I\in V^N$ can be expressed as
\begin{equation}\label{standard Lagrange interpolation}
	\begin{aligned}
		v^I(x,y)=&\sum_{i=0}^{N-1}\sum_{s=0}^{k-1}\left(\sum_{j=0}^{N-1}\sum_{t=0}^{k-1}v(x_i^s,y_j^t)\theta_{i,j}^{s,t}(x,y)+v(x_i^s,y_N^0)\theta_{i,N}^{s,0}(x,y)\right)\\
		&+\sum_{j=0}^{N-1}\sum_{t=0}^{k-1}v(x_N^0,y_j^t)\theta_{N,j}^{0,t}(x,y)+v(x_N^0,y_N^0)\theta_{N,N}^{0,0}(x,y),
	\end{aligned}
\end{equation}
here $\theta_{i,j}^{s,t}(x,y)\in V^N$ is the piecewise tensor-product interpolation basis function of degree $k$ connected to the point $(x_i^s,y_j^t)$. Write $\theta_{i,j}^{s,t}(x,y)$ as $\theta_{i,j}^{s,t}$ for simplicity.

Recall \eqref{decomposition of u}, we can decompose the new interpolation $\Pi u$ as
\begin{equation}\label{decomposition of new interpolation u}
	\Pi u=S^I+\pi_1E_1+\pi_2E_2+\pi_{12}E_{12},
\end{equation}
where $S^I$ represents the standard Lagrange interpolation of $S$, and
\begin{align}
	&\pi_1E_1=E_1^I-\sum_{s=0}^{k-1}\left(\sum_{t=1}^{k-1}E_1(x_{N/2-1}^s,y_0^t)\theta_{N/2-1,0}^{s,t}-\sum_{j=1}^{N-1}\sum_{t=0}^{k-1}E_1(x_{N/2-1}^s,y_j^t)\theta_{N/2-1,j}^{s,t}\right),\label{new interpolation-1}\\
	&\pi_2E_2=E_2^I-\sum_{t=0}^{k-1}\left(\sum_{s=1}^{k-1}E_2(x_0^s,y_{N/2-1}^t)\theta_{0,N/2-1}^{s,t}+\sum_{i=1}^{N-1}\sum_{s=0}^{k-1}E_2(x_i^s,y_{N/2-1}^t)\theta_{i,N/2-1}^{s,t}\right),\label{new interpolation-2}\\
	&\pi_{12}E_{12}=E_{12}^I-\sum_{s=0}^{k-1}\sum_{t=0}^{k-1}E_{12}(x_{N/2-1}^s,y_{N/2-1}^t)\theta_{N/2-1,N/2-1}^{s,t}\label{new interpolation-3}.
\end{align}
\begin{remark}
	The concept of $\pi_1E_1$ involves subtracting some intractable terms that fail to achieve uniform convergence from the standard Lagrange interpolation $E_1^I$, and at the same time, ensuring $\Pi u$ satisfies the homogeneous Dirichlet boundary condition.  $\pi_2E_2$ and $\pi_{12}E_{12}$ are similarly constructed. 
\end{remark}

According to \cite[Theorem 2.7]{Apel:1999-Anisotropic}, we have the following interpolation errors.
\begin{lemma}\label{interpolation errors}
	Let $p\in(1,\infty]$ and $K_{i,j}\in\mathcal{T}_N$ with $i,j=0,1,\cdots,N-1$. For any $w\in W^{k+1,p}(\Omega)$, its standard Lagrange interpolation $w^{I}$ at the vertices of $K_{i,j}$ satisfies
	\begin{equation*}
		\begin{aligned}
			&\Vert
			w-w^{I}\Vert_{L^p(K_{i,j})}\le C\sum_{m+n=k+1}h_{x,i}^{m}h_{y,j}^{n}\Vert\frac{\partial^{k+1} w}{\partial x^{m}\partial y^{n}} \Vert_{L^p(K_{i,j})},\\
			&\Vert(w-w^{I})_{x}\Vert_{L^p(K_{i,j})}\le C\sum_{m+n=k}h_{x,i}^{m}h_{y,j}^{n}\Vert\frac{\partial^{k+1} w}{\partial x^{m+1}\partial y^{n}} \Vert_{L^p(K_{i,j})},\\
			&\Vert
			(w-w^{I})_{y}\Vert_{L^p(K_{i,j})}\le C\sum_{m+n=k}h_{x,i}^{m}h_{y,j}^{n}\Vert\frac{\partial^{k+1} w}{\partial x^{m}\partial y^{n+1}} \Vert_{L^p(K_{i,j})},
		\end{aligned}
	\end{equation*}
	here $m$ and $n$ denote non-negative integers.  
\end{lemma}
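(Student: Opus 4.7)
The plan is to reduce to the unit reference square via the anisotropic affine map $F(\hat x,\hat y)=(x_i+h_{x,i}\hat x,\,y_j+h_{y,j}\hat y)$ and then exploit the fact that both $\mathcal{Q}_k$ and the Lagrange nodes factor as tensor products. On the reference element $\hat K=[0,1]^2$ the nodal interpolant $\hat\Pi$ splits as $\hat\Pi=\hat\Pi_x\otimes\hat\Pi_y$, where each one-dimensional factor is the standard $(k+1)$-node Lagrange operator. Since $W^{k+1,p}(\hat K)\hookrightarrow C^0(\hat K)$ for $p\in(1,\infty]$, all nodal values are well defined, and the operators $\hat\Pi_x$, $\hat\Pi_y$ are $L^p$-stable in their respective variables uniformly in the transverse variable.

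First I would write the telescoping identity
\[
	w-\Pi w = (I-\Pi_x)w + \Pi_x(I-\Pi_y)w,
\]
apply the one-dimensional Lagrange estimate $\|v-\Pi_x v\|_{L^p(I_x)}\le C h_{x,i}^{k+1}\|\partial_x^{k+1}v\|_{L^p(I_x)}$ on each horizontal fiber with $y$ held fixed, and then integrate in $y$ to obtain the contribution $h_{x,i}^{k+1}\|\partial_x^{k+1}w\|_{L^p(K_{i,j})}$. For the remaining term, $L^p$-stability of $\Pi_x$ combined with the analogous one-dimensional estimate in $y$ produces $h_{y,j}^{k+1}\|\partial_y^{k+1}w\|_{L^p(K_{i,j})}$. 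The derivative bounds follow from the same strategy, this time differentiating before applying one of the one-dimensional operators and invoking the 1D error estimate for the derivative of a Lagrange interpolant.

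The main technical obstacle is producing the mixed terms $h_{x,i}^m h_{y,j}^n\,\|\partial_x^m\partial_y^n w\|_{L^p(K_{i,j})}$ with $m+n=k+1$ and $1\le m,n\le k$, since these never appear in an isotropic Bramble--Hilbert bound. To capture them I would refine the splitting on the reference square as
\[
	I-\hat\Pi = (I-\hat\Pi_x) + (I-\hat\Pi_y) - (I-\hat\Pi_x)(I-\hat\Pi_y),
\]
and apply a one-dimensional Deny--Lions/Bramble--Hilbert argument in each variable separately, exploiting the fact that $\hat\Pi_x$ and $\hat\Pi_y$ preserve $\mathbb{P}_k$ in their respective directions. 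Pulling the estimates back to $K_{i,j}$ via $F$ then produces exactly the anisotropic weights $h_{x,i}^m h_{y,j}^n$ on the mixed-derivative norms. This is precisely the machinery developed in \cite[Theorem~2.7]{Apel:1999-Anisotropic}, which the paper invokes directly.
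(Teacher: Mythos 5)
Your sketch is correct and coincides with the paper's approach: the paper supplies no proof of its own, simply invoking Apel's Theorem~2.7, and the tensor-product factorization $\hat\Pi=\hat\Pi_x\otimes\hat\Pi_y$, the fiber-wise one-dimensional error bounds, and the anisotropic Bramble--Hilbert argument you outline are exactly the machinery behind that citation. One small imprecision worth noting: the nodal operator $\Pi_x$ is not genuinely $L^p$-stable (point values are undefined on $L^p$), but it is bounded from $W^{1,p}$ into $L^p$ with an $h_{x,i}$-weighted constant for $p>1$; that scaled stability is in fact what injects the mixed-derivative terms into the bound, so they are a byproduct rather than the ``main technical obstacle'' you flag --- indeed, since the right-hand side is a sum, bounding by the two pure-derivative terms alone would already be a stronger (hence sufficient) estimate for the first inequality.
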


Next, some preliminary results will be given.
\begin{lemma}\label{error estimates}
	Let Assumptions \ref{bound} and \ref{restriction} hold. Let $\sigma\ge k+1$. Denote $E_i^I$ as the standard Lagrange interpolations of $E_{i}\,(i=1,2,12)$, respectively. Then, the following interpolation error estimates can be obtained on Bakhvalov-type mesh \eqref{Bakhvalov-type mesh}.
	\begin{align}
		&\Vert E_1-E_1^I\Vert+\Vert E_2-E_2^I\Vert\le CN^{-(k+1)},\\
		&\Vert E_{12}-E_{12}^I\Vert\le C\varepsilon N^{-k}+C\varepsilon^{1/2}N^{-(k+1)}+CN^{-(1+2\sigma)},\\
		&\Vert E_1-E_1^I\Vert_\varepsilon+\Vert E_2-E_2^I\Vert_\varepsilon+\Vert E_{12}-E_{12}^I\Vert_\varepsilon\le CN^{-k}.
	\end{align}
\end{lemma}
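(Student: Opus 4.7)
The plan is to apply the anisotropic interpolation bound of Lemma~\ref{interpolation errors} on each mesh rectangle $K_{i,j}$, substitute the pointwise derivative estimates from Assumption~\ref{bound}, and then sum the local contributions by exploiting the mesh properties of Lemma~\ref{step}. The crucial device is the last inequality in Lemma~\ref{step}, namely $h_{x,i}^{\rho}e^{-\beta_{1}x/\varepsilon}\le C\varepsilon^{\rho}N^{-\rho}$ for $0\le i\le N/2-2$ and $0\le\rho\le\sigma$ (and its $y$-analogue); together with the hypothesis $\sigma\ge k+1$, it converts the blow-up factor $\varepsilon^{-m}$ in derivatives of the layer functions into a clean $N^{-m}$ smallness on every fine cell.

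For the $L^{2}$ bound of $E_{1}-E_{1}^{I}$, I would split $\Omega$ into the fine strip $[0,x_{N/2}]\times[0,1]$ and its coarse complement. On the fine strip, keep only the pure $x$-derivative term $(m,n)=(k+1,0)$ in Lemma~\ref{interpolation errors}: the per-cell estimate $Ch_{x,i}^{k+1}\varepsilon^{-(k+1)}\Vert e^{-\beta_{1}x/\varepsilon}\Vert_{K_{i,j}}\le CN^{-(k+1)}(h_{x,i}h_{y,j})^{1/2}$ squares and sums to $CN^{-2(k+1)}$, and the single transition column $i=N/2-1$ is absorbed via the crude $L^{\infty}$ bound $e^{-\beta_{1}x_{N/2-1}/\varepsilon}=N^{-\sigma}\le N^{-(k+1)}$. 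On the coarse complement, $e^{-\beta_{1}x/\varepsilon}\le\varepsilon^{\sigma}\le N^{-(k+1)}$ pointwise, so an $L^{\infty}$-based estimate suffices. The bound for $E_{2}-E_{2}^{I}$ follows by interchanging the roles of $x$ and $y$.

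For $E_{12}-E_{12}^{I}$, I would partition $\Omega$ into four subregions according to whether $i,j$ lie below or above $N/2$. In the doubly-fine subregion the same two-dimensional technique with $m+n=k+1$ yields a contribution of order $\varepsilon N^{-(k+1)}$, which is absorbed into the $\varepsilon^{1/2}N^{-(k+1)}$ term since $\varepsilon\le 1$. In the doubly-coarse subregion, combining $|E_{12}|\le C\varepsilon^{2\sigma}$ with the identity $\int_{x_{N/2}}^{1}e^{-2\beta_{1}x/\varepsilon}dx\le C\varepsilon^{1+2\sigma}$ and the hypothesis $\varepsilon\le N^{-1}$ yields the $N^{-(1+2\sigma)}$ term. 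In the two mixed subregions, the transition row/column at index $N/2-1$, where the mesh-property inequality above is no longer valid and the step size may reach $O(N^{-1})$ rather than $O(\varepsilon N^{-1})$, is what generates the $\varepsilon N^{-k}$ contribution.

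For the energy-seminorm part, the $\partial_{x}$ and $\partial_{y}$ inequalities in Lemma~\ref{interpolation errors} contain one fewer mesh factor than the $L^{2}$ one, so running the same computation produces $|E_{i}-E_{i}^{I}|_{1}\le C\varepsilon^{-1/2}N^{-k}$ for $i=1,2,12$; multiplying by $\varepsilon^{1/2}$ and combining with the $L^{2}$ pieces (which are already $O(N^{-(k+1)})$) gives the stated $CN^{-k}$ bound. The hardest step will be the corner-layer estimate for $\Vert E_{12}-E_{12}^{I}\Vert$: because the mesh-property inequality of Lemma~\ref{step} is not directly available at the indices $N/2-1$, one must carefully balance the $\sigma\varepsilon$-versus-$N^{-1}$ step sizes against the $\varepsilon^{-m}$ derivative growth in each of the three regimes (doubly fine, doubly coarse, mixed), and it is precisely this case analysis that forces the three distinct scales $\varepsilon N^{-k}$, $\varepsilon^{1/2}N^{-(k+1)}$, and $N^{-(1+2\sigma)}$ to appear in the final bound rather than a single clean $N^{-(k+1)}$.
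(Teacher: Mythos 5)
The paper does not actually prove this lemma: its ``proof'' is a one-line reference to Lemmas~3.2 and 3.3 of the cited work \cite{zhang1Liu2:2023-Convergence}. Your proposal supplies the content that citation stands for, and its strategy --- anisotropic interpolation estimates of Lemma~\ref{interpolation errors}, the pointwise layer-derivative bounds of Assumption~\ref{bound}, the mesh property $h_{x,i}^{\rho}e^{-\beta_1 x/\varepsilon}\le C\varepsilon^{\rho}N^{-\rho}$ for $i\le N/2-2$, splitting into fine/transition/coarse subregions, $L^\infty$ stability of $\mathcal{Q}_k$ interpolation at the transition index $N/2-1$ (so that $\Vert E_i-E_i^I\Vert_{L^\infty(K)}\le 2\Vert E_i\Vert_{L^\infty(K)}$, which avoids the unusable $h_{x,N/2-1}^{k+1}\varepsilon^{-(k+1)}$ factor), and finally converting the $\varepsilon^{-1/2}N^{-k}$ seminorm estimate into the energy bound via $\varepsilon^{1/2}$ --- is exactly the standard route for such estimates and is consistent with the cited source. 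Two small cautions, neither of which is a gap. First, you say to ``keep only the pure $x$-derivative term'' of Lemma~\ref{interpolation errors}; all the mixed terms $(m,n)$ with $m+n=k+1$ must be checked, though they admit the same $N^{-(k+1)}$ bound since $h_{y,j}\le CN^{-1}$ always, so the conclusion is unaffected. Second, your attribution of the $\varepsilon N^{-k}$ term specifically to the transition row/column is speculative: a careful computation there via the stability bound actually yields contributions of order $\varepsilon^{1/2}N^{-(k+3/2)}$ or better, so the $\varepsilon N^{-k}$ term in the stated lemma is most likely a non-sharp majorant of the quantities your (and the reference's) accounting actually produces; since your bounds sit below the stated ones, this does not create an error.
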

\begin{proof}
	Detailed demonstrations can refer to \cite[Lemmas 3.2 and 3.3]{zhang1Liu2:2023-Convergence}.
\end{proof}

We also require the following inequalities for the subsequent analysis: $\forall K\in\mathcal{T}_N$, for $i=1,2,12$, we have
\begin{align}
	&\Vert E_i^I\Vert_{\infty,K}\le \Vert E_i\Vert_{\infty,K},\label{max-1}\\
	&\Vert \pi_iE_i\Vert_{\infty,K}\le \Vert E_i\Vert_{\infty,K}\label{max-2},
\end{align}
where $E_i^I$ denote the standard Lagrange interpolation and $\pi_iE_i$ are defined in \eqref{new interpolation-1}-\eqref{new interpolation-3}.

\section{Uniform convergence}\label{sec 4}
In this section, we will prove uniform convergence of optimal order for problem \eqref{model problem} on Bakhvalov-type mesh \eqref{Bakhvalov-type mesh}. 

For the sake of briefness, set
\begin{equation*}
	\begin{aligned}
		[x_{N/2-1},x_{N/2}]\times[y_{N-1},y_N]&=:K_a,\\
		[x_{N-1},x_N]\times[y_{N/2-1},y_{N/2}]&=:K_b,\\
		\Omega\textbackslash(K_a\cup K_b)&=:\Omega_0.
	\end{aligned}
\end{equation*}  

Let $v^N:=\Pi u-u^N$. Recall \eqref{variation}, \eqref{coercivity}, \eqref{orthogonality}, \eqref{decomposition of u} and \eqref{decomposition of new interpolation u}, one can obtain
\begin{equation}
	\begin{aligned}
		&C\Vert v^N\Vert_\varepsilon^2\le a(v^N,v^N)=a(\Pi u-u,v^N)\\
		=&\varepsilon\int_{\Omega}\nabla(\Pi u-u)\nabla v^N\mathrm{d}x\mathrm{d}y+\int_{\Omega}c(\Pi u-u)v^N\mathrm{d}x\mathrm{d}y\\
		&-\int_{K_a}\boldsymbol{b}\cdot\nabla(\Pi u-u) v^N\mathrm{d}x\mathrm{d}y-\int_{K_b}\boldsymbol{b}\cdot\nabla (\Pi u-u)v^N\mathrm{d}x\mathrm{d}y\\
		&-\sum_{i=1,2,12}\int_{\Omega_0}\boldsymbol{b}\cdot\nabla(\pi_iE_i-E_i) v^N\mathrm{d}x\mathrm{d}y-\int_{\Omega_0}\boldsymbol{b}\cdot\nabla(S^I-S) v^N\mathrm{d}x\mathrm{d}y\\
		=:&\uppercase\expandafter{\romannumeral1}+\uppercase\expandafter{\romannumeral2}+\uppercase\expandafter{\romannumeral3}+\uppercase\expandafter{\romannumeral4}+\uppercase\expandafter{\romannumeral5}+\uppercase\expandafter{\romannumeral6}.
	\end{aligned}
\end{equation}

Estimates for $\uppercase\expandafter{\romannumeral1}-\uppercase\expandafter{\romannumeral6}$ are presented in the following lemmas.

\begin{lemma}\label{main-1}
	Let Assumptions \ref{bound} and \ref{restriction} hold. Let $\sigma\ge k+1$. Then one has
	\begin{equation}
		|\uppercase\expandafter{\romannumeral1}+\uppercase\expandafter{\romannumeral2}|\le CN^{-k}\Vert v^N\Vert_\varepsilon.
	\end{equation}
\end{lemma}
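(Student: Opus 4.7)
The approach is to split $\mathrm{I}+\mathrm{II}$ via Cauchy--Schwarz and reduce everything to proving the single energy-norm interpolation bound
\begin{equation*}
\|\Pi u - u\|_\varepsilon \le C N^{-k}.
\end{equation*}
Indeed, using $\varepsilon^{1/2}|v^N|_1 \le \|v^N\|_\varepsilon$, $\|v^N\| \le \|v^N\|_\varepsilon$, and the boundedness of $c$, Cauchy--Schwarz immediately gives
\begin{equation*}
|\mathrm{I}| + |\mathrm{II}| \le \bigl(\varepsilon^{1/2}|\Pi u - u|_1 + C\|\Pi u - u\|\bigr)\|v^N\|_\varepsilon \le C\,\|\Pi u - u\|_\varepsilon\,\|v^N\|_\varepsilon,
\end{equation*}
so the lemma will follow once the displayed bound is established.

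To prove the interpolation bound I would use \eqref{decomposition of u} and \eqref{decomposition of new interpolation u} to write
\begin{equation*}
\Pi u - u = (S^I - S) + \sum_{i\in\{1,2,12\}}(E_i^I - E_i) + \sum_{i\in\{1,2,12\}}(\pi_i E_i - E_i^I).
\end{equation*}
The smooth piece $S^I-S$ is handled by Lemma \ref{interpolation errors} with $p=2$: Assumption \ref{bound} bounds every derivative of $S$ uniformly, and by Lemma \ref{step} one has $h_{x,i},h_{y,j}\le 2N^{-1}$ in the coarse subdomain and $h_{x,i},h_{y,j}=O(\varepsilon)$ in the fine subdomain, so summation over $\mathcal T_N$ yields $\|S-S^I\|_\varepsilon\le CN^{-k}$. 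The three middle summands $E_i^I-E_i$ are exactly what Lemma \ref{error estimates} controls, each by $CN^{-k}$ in the energy norm.

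The remaining task is the three correction terms $\pi_i E_i - E_i^I$ defined in \eqref{new interpolation-1}--\eqref{new interpolation-3}. Each is a linear combination of basis functions $\theta_{\cdot,\cdot}^{\cdot,\cdot}$ supported near the transition lines $x=x_{N/2-1}$ or $y=y_{N/2-1}$, with nodal coefficients $E_i$ evaluated at points whose relevant layer coordinate is $\ge x_{N/2-1}$ or $\ge y_{N/2-1}$. By Assumption \ref{bound} combined with the identity $e^{-\beta_\ell x_{N/2-1}/\varepsilon}=N^{-\sigma}$ from Lemma \ref{step}, every such nodal value is bounded by $CN^{-\sigma}\le CN^{-(k+1)}$ whenever $\sigma\ge k+1$. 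Inserting this pointwise decay into standard $L^2$- and $H^1$-scaling estimates for the tensor-product basis functions on their supports, and summing across the $O(N)$ affected rectangles with the aspect-ratio control from Lemma \ref{step} (in particular $h_{x,N/2-1}\le 2\sigma N^{-1}$), produces $\|\pi_i E_i - E_i^I\|_\varepsilon\le CN^{-k}$.

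The main obstacle I expect is the $\varepsilon^{1/2}|\cdot|_1$ contribution of the corrections: the basis function $\theta_{N/2-1,j}^{s,t}$ is supported on rectangles whose $x$-extent can be as small as $h_{x,N/2-2}\sim\sigma\varepsilon$, so the inverse factor $h_x^{-1}$ in the $H^1$-seminorm is dangerous. One has to balance this against the pointwise smallness $N^{-\sigma}$ of the nodal values and against the sum $\sum_j(h_{y,j}/h_x + h_x/h_{y,j})$ coming from the anisotropic basis functions, where the $y$-widths vary over many orders of magnitude between the fine and coarse subdomains. The choice $\sigma\ge k+1$ is exactly the margin that makes this trade-off close out at the target rate $N^{-k}$.
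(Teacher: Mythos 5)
Your proposal follows essentially the same route as the paper: reduce $|\mathrm{I}+\mathrm{II}|$ to the single bound $\Vert\Pi u - u\Vert_\varepsilon \le CN^{-k}$ via Cauchy--Schwarz, then handle the smooth part with Lemma~\ref{interpolation errors}, the layer interpolation errors $E_i^I-E_i$ with Lemma~\ref{error estimates}, and the correction terms $\pi_i E_i - E_i^I$ via the $N^{-\sigma}$ decay of the nodal values along $x=x_{N/2-1}$ (or $y=y_{N/2-1}$) together with the mesh-width bounds from Lemma~\ref{step}. The paper merely packages the triangle inequality a bit differently (squaring and comparing $\pi_iE_i-E_i$ against $E_i^I$), so the decomposition and all the key estimates coincide with yours.
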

\begin{proof}
	Consider \eqref{decomposition of u} and \eqref{decomposition of new interpolation u}, $\Pi u-u$ can be split into
	\begin{equation*}
		\Pi u-u= (\pi_1E_1-E_1) +(\pi_2E_2-E_2)+(\pi_{12}E_{12}-E_{12})+(S^I-S).
	\end{equation*}
	
	Triangle inequality yields
	\begin{equation}
		\Vert \Pi u-u\Vert^2\le \Vert\pi_1E_1-E_1\Vert^2+\Vert\pi_2E_2-E_2\Vert^2+\Vert\pi_{12}E_{12}-E_{12}\Vert^2+\Vert S^I-S\Vert^2.
	\end{equation}
	
	Triangle inequality, Lemma \ref{error estimates}, Lemma \ref{step} and \eqref{new interpolation-1} yield
	\begin{equation}\label{E1-1}
		\begin{aligned}
			\Vert \pi_1E_1-E_1 \Vert^2&\le\Vert \pi_1E_1-E_1^I\Vert^2+\Vert E_1^I-E_1\Vert^2\le CN^{-2(k+1)},
		\end{aligned}
	\end{equation}
	where we have used 
	\begin{equation*}
		\begin{aligned}
			\Vert \pi_1E_1-E_1^I\Vert^2&\le CN^{-2\sigma}\sum_{s=0}^{k-1}\left(\sum_{t=1}^{k-1}\Vert\theta_{N/2-1,0}^{s,t}\Vert^2+\sum_{j=1}^{N-1}\sum_{t=0}^{k-1}\Vert\theta_{N/2-1,j}^{s,t}\Vert^2\right)\\
			&\le CN^{-2\sigma}\sum_{j=0}^{N-1}(h_{x,N/2-1}h_{y,j})\\
			&\le CN^{-2\sigma-1}.
		\end{aligned}
	\end{equation*}
	
	In a similar manner as $\Vert \pi_1E_1-E_1 \Vert^2$, we can obtain
	\begin{align}
		&\Vert \pi_2E_2-E_2 \Vert^2\le CN^{-2(k+1)},\label{E2-1}\\
		&\Vert \pi_{12}E_{12}-E_{12} \Vert^2\le C\varepsilon^2N^{-2k}+C\varepsilon N^{-2(k+1)}+ CN^{-4\sigma-2}\label{E12-1}.
	\end{align}
	
	\begin{equation}\label{S-1}
		\Vert S^I-S\Vert^2\le CN^{-2(k+1)},
	\end{equation}
	can be deduced by Lemma \ref{interpolation errors}, Lemma \ref{bound} and Assumption \ref{bound}.
	
	Collecting \eqref{E1-1}-\eqref{S-1}, we can prove
	\begin{equation}\label{u-1}
		\Vert\Pi u-u\Vert\le CN^{-(k+1)}.
	\end{equation}
	
	$\Vert \Pi u-u\Vert_\varepsilon$ can be analyzed similarly as above, then one has
	\begin{equation}\label{u-2}
		\Vert\Pi u-u\Vert_\varepsilon\le CN^{-k}.
	\end{equation}
	
	In conclusion,
	\begin{equation}
		|\uppercase\expandafter{\romannumeral1}+\uppercase\expandafter{\romannumeral2}|\le C\Vert\Pi u-u\Vert_\varepsilon\Vert v^N\Vert_\varepsilon\le CN^{-k}\Vert v^N\Vert_\varepsilon.
	\end{equation}
\end{proof}

In the next lemma, a novel analysis technique will be established to get an optimal result for \uppercase\expandafter{\romannumeral3} and \uppercase\expandafter{\romannumeral4}.
\begin{lemma}\label{main-2}
	Let Assumptions \ref{bound} and \ref{restriction} hold. Let $\sigma\ge k+1$. Then one can obtain
	\begin{equation}
		|\uppercase\expandafter{\romannumeral3}+\uppercase\expandafter{\romannumeral4}|\le CN^{-(k+1)}\Vert v^N\Vert_\varepsilon+C\varepsilon^{\sigma-1/2}N^{-1/2}\Vert v^N\Vert_\varepsilon.
	\end{equation}
\end{lemma}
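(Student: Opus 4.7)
The plan is to exploit symmetry: under the swap $x\leftrightarrow y$ (which exchanges $K_a,K_b$ and $E_1,E_2$), $\uppercase\expandafter{\romannumeral3}$ becomes $\uppercase\expandafter{\romannumeral4}$, so I would work out $\uppercase\expandafter{\romannumeral3}$ in detail and then invoke symmetry for $\uppercase\expandafter{\romannumeral4}$. On $K_a$ I would decompose $\Pi u-u$ via \eqref{decomposition of u} and \eqref{decomposition of new interpolation u} into $(S^I-S)+(\pi_1 E_1-E_1)+(\pi_2 E_2-E_2)+(\pi_{12}E_{12}-E_{12})$ and estimate the four resulting integrals $\int_{K_a}\boldsymbol{b}\cdot\nabla(\cdot)\,v^N$ separately.

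Three of the four pieces are routine. On $K_a$ the contributions of $\pi_2 E_2-E_2$ and $\pi_{12}E_{12}-E_{12}$ are exponentially small in $y$ because $y\ge y_{N-1}$ forces $e^{-\beta_2 y/\varepsilon}\le e^{-\beta_2/(2\varepsilon)}$, so \eqref{max-2} together with Cauchy--Schwarz and Lemma \ref{step} absorbs them into the $\varepsilon^{\sigma-1/2}N^{-1/2}$ residual. For $S^I-S$, Lemma \ref{interpolation errors} combined with $|K_a|\le CN^{-2}$ (from Lemma \ref{step}) yields $\Vert(S^I-S)_x\Vert_{K_a}+\Vert(S^I-S)_y\Vert_{K_a}\le CN^{-(k+1)}$, which pairs with $\Vert v^N\Vert$ to give the first target term. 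Where a sharper bound is needed the paper's ``novel technique'' enters: one splits the 2-D error into one-dimensional interpolation errors in the $x$- and $y$-directions and invokes the boundary identity $S(x,1)=-(E_1+E_2+E_{12})(x,1)=\mathcal{O}(N^{-\sigma})$ (forced by $u=0$ on $\partial\Omega$) to make the contribution of $S$ on the top edge of $K_a$ itself of layer size.

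The main obstacle is $\int_{K_a}b_1(\pi_1 E_1-E_1)_x\,v^N\,dxdy$. Here $\partial_x E_1$ carries an $\varepsilon^{-1}$ factor while $h_{x,N/2-1}$ is only $\mathcal{O}(N^{-1})$ rather than $\mathcal{O}(\varepsilon N^{-1})$, so a standard 2-D interpolation estimate on $(E_1^I-E_1)_x$ produces a factor $\varepsilon^{-1/2}$ that cannot be absorbed by $\Vert v^N\Vert_\varepsilon$ without a lower bound on $\varepsilon$. My plan is to use the explicit structure of $\pi_1 E_1|_{K_a}$ read off from \eqref{new interpolation-1}: after the subtractions, $\pi_1 E_1$ vanishes at every Lagrange node of $K_a$ except those on the right edge $\{x=x_{N/2}\}$, where $|E_1|\le e^{-\beta_1 x_{N/2}/\varepsilon}=\varepsilon^\sigma$, and the top edge $\{y=1\}$, where $|E_1|\le e^{-\beta_1 x_{N/2-1}/\varepsilon}=N^{-\sigma}$. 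Differentiating the corresponding tensor-product expansion and using a 1-D inverse estimate on the basis function concentrated at $x_{N/2}$ yields a right-edge contribution of size $C\varepsilon^\sigma(h_{y,N-1}/h_{x,N/2-1})^{1/2}\le C\varepsilon^{\sigma-1/2}N^{-1/2}$ (using $h_{x,N/2-1}\ge C\varepsilon$ from Lemma \ref{step}), which is exactly the residual term of the lemma; the top-edge contribution is of order $CN^{-\sigma}\le CN^{-(k+1)}$. The remaining $-E_1$ piece is bounded by $\Vert E_{1,x}\Vert_{L^1(K_a)}\le Ch_{y,N-1}\int_{x_{N/2-1}}^{x_{N/2}}\varepsilon^{-1}e^{-\beta_1 x/\varepsilon}\,dx\le CN^{-\sigma-1}$, paired with an inverse bound on $\Vert v^N\Vert_{\infty,K_a}$. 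Summing the four pieces on $K_a$ and invoking symmetry on $K_b$ completes the proof; the hardest single step is the right-edge bound, where the $\varepsilon^{-1/2}$ from the inverse estimate is balanced exactly by $\varepsilon^\sigma$ from the transition identity $e^{-\beta_1 x_{N/2}/\varepsilon}=\varepsilon^\sigma$.
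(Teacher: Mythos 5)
Your plan starts from the four--term decomposition of $\Pi u-u$ on $K_a$ into $(S^I-S)$, $(\pi_1E_1-E_1)$, $(\pi_2E_2-E_2)$, $(\pi_{12}E_{12}-E_{12})$ and estimates each integral separately. This is precisely what the paper calls the ``standard error estimates'' and declares unworkable, because of \eqref{intractable}. The obstruction appears in exactly the spot your plan underestimates: the top-edge nodes of $\pi_1E_1$ on $K_a$. Those nodes carry the coefficient $E_1(x_{N/2-1}^s,y_N^0)=\mathcal{O}(N^{-\sigma})$, but the basis function's $x$-derivative has $L^2(K_a)$-norm $\sim h_{x,N/2-1}^{-1/2}h_{y,N-1}^{1/2}$, and Lemma \ref{step} only gives $h_{x,N/2-1}\ge C\varepsilon$ there (not $\gtrsim N^{-1}$). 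The resulting bound is $CN^{-\sigma}h_{x,N/2-1}^{-1/2}h_{y,N-1}^{1/2}\Vert v^N\Vert_\varepsilon\le CN^{-\sigma-1/2}\varepsilon^{-1/2}\Vert v^N\Vert_\varepsilon$, which is not uniformly bounded when $\varepsilon\ll N^{-1}$; it is not ``$CN^{-\sigma}$'' as you claim. The same $\varepsilon^{-1/2}$ blowup wrecks your treatment of the ``remaining $-E_1$ piece'': $\Vert E_{1,x}\Vert_{L^1(K_a)}\le CN^{-\sigma-1}$ is correct, but the inverse bound you then invoke gives $\Vert v^N\Vert_{\infty,K_a}\le C(h_{x,N/2-1}h_{y,N-1})^{-1/2}\Vert v^N\Vert_{K_a}\le C\varepsilon^{-1/2}N^{1/2}\Vert v^N\Vert_\varepsilon$, producing again $CN^{-\sigma-1/2}\varepsilon^{-1/2}\Vert v^N\Vert_\varepsilon$.

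The missing idea is that the offending top-edge nodal value of $\pi_1E_1$ must be \emph{cancelled}, not estimated. Since $\Pi u$ is the global interpolant satisfying $\Pi u=0$ on $\partial\Omega$, the nodal value at $(x_{N/2-1}^s,y_N^0)$ of $S^I+\pi_1E_1+\pi_2E_2+\pi_{12}E_{12}$ is $S+E_1+E_2+E_{12}=u=0$; the $N^{-\sigma}$-size nodal value from $\pi_1E_1$ is exactly offset by $S^I$'s nodal value (and exponentially small corrections from $E_2,E_{12}$). The paper therefore works directly with $\Pi u|_{K_a}$ in its Lagrange form, splits it into a smooth part $\mathcal{B}_S$ and a layer part $\mathcal{B}_E$ (none of which contains a top-edge basis function), and then uses one-dimensional interpolation errors for $\mathcal{B}_S$ plus the identity $S(x,y_N^0)=-E(x,y_N^0)$ after a Green's-formula integration by parts. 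If you insist on estimating $(\pi_1E_1-E_1)$ in isolation, you cannot recover this cancellation, and the $\varepsilon^{-1/2}$ factor survives every Cauchy--Schwarz or integration-by-parts reshuffling I can see. The correct bound for the $-E_1$ piece also requires Green's formula on $\int_{K_a}b_1 E_xv^N$ together with \eqref{partrial method}, rather than an $L^1$--$L^\infty$ pairing. Your handling of $S^I-S$, $\pi_2E_2-E_2$, $\pi_{12}E_{12}-E_{12}$, and the right-edge nodes of $\pi_1E_1$ is fine, and the symmetry argument to pass from $\uppercase\expandafter{\romannumeral3}$ to $\uppercase\expandafter{\romannumeral4}$ is legitimate; only the two top-edge/$-E_1$ steps are wrong, but they are the crux.
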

\begin{proof}
	
	First, we will analyze $\int_{K_a}b_1(\Pi u-u)_xv^N\mathrm{d}x\mathrm{d}y$, which is the main focus of this article. Standard error estimates, i.e., studying $\int_{K_a}b_1(S^I-S)_xv^N\mathrm{d}x\mathrm{d}y$ and $\int_{K_a}b_1(\pi_iE_i-E_i)_xv^N\mathrm{d}x\mathrm{d}y\ (i=1,2,12)$ separately, would result in the inability of \begin{equation}\label{intractable}
		|\int_{K_a}b_1\sum_{s=0}^{k-1}E_1(x_{N/2-1}^s,y_N^0)(\theta_{N/2-1,N}^{s,0})_x v^N\mathrm{d}x\mathrm{d}y|\le Ch_{x,N/2-1}^{-1/2}h_{y,N-1}^{1/2}N^{-\sigma}\Vert v^N\Vert_\varepsilon
	\end{equation} to achieve uniform convergence, where $\frac{1}{2}\sigma\varepsilon\le h_{x,N/2-1}\le 2\sigma N^{-1}$ and $N^{-1}\le h_{y,N-1}\le 2N^{-1}$. 
	
	To avoid analyzing \eqref{intractable} and to get an optimal result, we propose a new analysis technique. Firstly, according to the boundary condition $\Pi u=0\ {\text on}\ \partial\Omega$ and \eqref{decomposition of new interpolation u}-\eqref{new interpolation-3}, we derive that $\Pi u|_{K_a}$ satisfies:
	\begin{equation*}
		\left\{
		\begin{aligned}
			&\Pi u(x_{N/2-1}^s,y_N^0)=0 \quad s=0,1,\dots,k-1,\\
			&\Pi u(x_{N/2}^0,y_N^0)=0,\\
			&\Pi u(x_{N/2}^0,y_{N-1}^t)=u(x_{N/2}^0,y_{N-1}^t)\quad t=0,1,\dots,k-1,\\
			&\Pi u(x_{N/2-1}^s,y_{N-1}^t)=u(x_{N/2-1}^s,y_{N-1}^t)-E_1(x_{N/2-1}^s,y_{N-1}^t) \quad s,t=0,1,\dots,k-1,
		\end{aligned}
		\right.
	\end{equation*}
	and its specific form is
	\begin{align*}
		\Pi u=&\sum_{t=0}^{k-1}u(x_{N/2}^0,y_{N-1}^t)\theta_{N/2,N-1}^{0,t}+\sum_{s=0}^{k-1}\sum_{t=0}^{k-1}u(x_{N/2-1}^s,y_{N-1}^t)\theta_{N/2-1,N-1}^{s,t}\\
		&-\sum_{s=0}^{k-1}\sum_{t=0}^{k-1}E_1(x_{N/2-1}^s,y_{N-1}^t)\theta_{N/2-1,N-1}^{s,t}.
	\end{align*}
	
	Then, decompose $\int_{K_a}b_1(\Pi u-u)_xv^N\mathrm{d}x\mathrm{d}y$ as the smooth part $\mathcal{B}_S$ and the layer part $\mathcal{B}_E$:
	\begin{equation*}
		\int_{K_a}b_1(\Pi u-u)_xv^N\mathrm{d}x\mathrm{d}y=\int_{K_a}b_1\left((\Pi u)_x-u_x\right)v^N\mathrm{d}x\mathrm{d}y=\mathcal{B}_S+\mathcal{B}_E,
	\end{equation*}
	where
	\begin{equation*}
			\begin{aligned}
				\mathcal{B}_S:=&\int_{K_a}b_1\left[\sum_{t=0}^{k-1}S(x_{N/2}^0,y_{N-1}^t)(\theta_{N/2,N-1}^{0,t})_x\right.\\
				&\left.+\sum_{s=0}^{k-1}\sum_{t=0}^{k-1}S(x_{N/2-1}^s,y_{N-1}^t)(\theta_{N/2-1,N-1}^{s,t})_x-S_x(x,y)\right] v^N\mathrm{d}x\mathrm{d}y,
			\end{aligned}
		\end{equation*}
	and
	\begin{equation*}
			\begin{aligned}
				\mathcal{B}_E:=&\int_{K_a}b_1\left[\sum_{t=0}^{k-1}E(x_{N/2}^0,y_{N-1}^t)(\theta_{N/2,N-1}^{0,t})_x+\sum_{s=0}^{k-1}\sum_{t=0}^{k-1}E_2(x_{N/2-1}^s,y_{N-1}^t)(\theta_{N/2-1,N-1}^{s,t})_x\right.\\
				&\left.+\sum_{s=0}^{k-1}\sum_{t=0}^{k-1}E_{12}(x_{N/2-1}^s,y_{N-1}^t)(\theta_{N/2-1,N-1}^{s,t})_x-E_x(x,y)\right]v^N\mathrm{d}x\mathrm{d}y.
			\end{aligned}
	\end{equation*}
	
	After conducting the aforementioned steps, it is evident that there is no further requirement to address the intractable term \eqref{intractable}. Then the primary difficulty turns into analyzing $\mathcal{B}_S$. To obtain the optimal convergence order of $\mathcal{B}_S$, we convert its corresponding estimates in the two-dimensional case into several types of one-dimensional interpolation error estimates. The specific conversion is as follows:
	
	The piecewise tensor-product interpolation basis function $\theta_{i,j}^{s,t}(x,y)$ can be expressed as the product of two univariate basis functions: $\theta_i^s(x)\times\theta_j^t(y)$, where $\theta_i^s(x)$ and $\theta_j^t(y)$ represent the basis function at point $(x_i^s,y_j^t)$ in the $x$- and $y$-directions, respectively. In light of this, we can obtain
	\begin{equation*}
		\begin{aligned}
&\sum_{t=0}^{k-1}S(x_{N/2}^0,y_{N-1}^t)(\theta_{N/2,N-1}^{0,t})_x+\sum_{s=0}^{k-1}\sum_{t=0}^{k-1}S(x_{N/2-1}^s,y_{N-1}^t)(\theta_{N/2-1,N-1}^{s,t})_x-S_x(x,y)\\
=&\sum_{t=0}^{k-1}\theta_{N-1}^t(y)\left(S(x_{N/2}^0,y_{N-1}^t)(\theta_{N/2}^0(x))_x+\sum_{s=0}^{k-1}S(x_{N/2-1}^s,y_{N-1}^t)(\theta_{N/2-1}^s(x))_x-S_x(x,y_{N-1}^t)\right)\\
&+\left(\sum_{t=0}^{k-1}\theta_{N-1}^t(y)S_x(x,y_{N-1}^t)+\theta_{N}^0(y)S_x(x,y_{N}^0)-S_x(x,y)\right)-\theta_{N}^0(y)S_x(x,y_{N}^0)\\
=&\sum_{t=0}^{k-1}\theta_{N-1}^t(y)\left(S_x^I(x,y_{N-1}^t)-S_x(x,y_{N-1}^t)\right)+\left(S_x^I(x,y)-S_x(x,y)\right)-\theta_{N}^0(y)S_x(x,y_{N}^0),
    \end{aligned}
\end{equation*}
here $S_x^I(x,y_{N-1}^t)-S_x(x,y_{N-1}^t)$ denotes the one-dimensional interpolation of variable $x$, and $S_x^I(x,y)-S_x(x,y)$ denotes the one-dimensional interpolation of variable $y$. 
	
The follwing one-dimensional interpolation errors (\cite[Theorem 3.1.4]{Cia:2002-Finite}) will be employed for the next estimates:
	\begin{align}
		&\Vert v-v^I\Vert_{W^{l,q}}(I_i)\le h_{x,i}^{k+1-l+1/q-1/p}|v|_{W^{k+1,p}(I_i)}\quad\forall v\in W^{k+1,p}(I_i),\label{x-direction error estimate}\\
		&\Vert v-v^I\Vert_{W^{l,q}}(J_j)\le h_{y,j}^{k+1-l+1/q-1/p}|v|_{W^{k+1,p}(J_j)}\quad\forall v\in W^{k+1,p}(J_j)\label{y-direction error estimate},
	\end{align}
	where $I_i$ denotes $[x_i,x_{i+1}]$ with $i=0,1,\dots,N-1$ and $J_j$ denotes $[y_j,y_{j+1}]$ with $j=0,1,\dots,N-1$, $l=0,1$ and $1\le p,q\le\infty$. 
	
	Now, we present a detailed demonstration for $\int_{K_a}b_1(\Pi u-u)_xv^N\mathrm{d}x\mathrm{d}y$, which can be decomposed as
	\begin{equation}\label{initial}
		\begin{aligned}
			&\int_{K_a}b_1(\Pi u-u)_xv^N\mathrm{d}x\mathrm{d}y\\
			=:&\mathcal{I}_1+\mathcal{I}_2+\mathcal{I}_3+\mathcal{I}_4+\mathcal{I}_5+\mathcal{I}_6+\mathcal{I}_7,
		\end{aligned}
	\end{equation}
	where
	\begin{align}
		&\mathcal{I}_1=\int_{K_a}b_1\sum_{t=0}^{k-1}\theta_{N-1}^t(y)\left(S_x^I(x,y_{N-1}^t)-S_x(x,y_{N-1}^t)\right)v^N\mathrm{d}x\mathrm{d}y,\label{remark-2}\\
		&\mathcal{I}_2=\int_{K_a}b_1\left( S_x^I(x,y)-S_x(x,y)\right)v^N\mathrm{d}x\mathrm{d}y,\label{remark-22}\\
		&\mathcal{I}_3=-\int_{K_a}b_1\theta_{N}^0(y)S_x(x,y_N^0)v^N\mathrm{d}x\mathrm{d}y,\label{remark-3}\\
		&\mathcal{I}_4=\int_{K_a}b_1\sum_{t=0}^{k-1}E(x_{N/2}^0,y_{N-1}^t)(\theta_{N/2,N-1}^{0,t})_xv^N\mathrm{d}x\mathrm{d}y,\label{remark-4}\\
		&\mathcal{I}_5=\int_{K_a}b_1\sum_{s=0}^{k-1}\sum_{t=0}^{k-1}E_2(x_{N/2-1}^s,y_{N-1}^t)(\theta_{N/2-1,N-1}^{s,t})_xv^N\mathrm{d}x\mathrm{d}y,\\
		&\mathcal{I}_6=\int_{K_a}b_1\sum_{s=0}^{k-1}\sum_{t=0}^{k-1}E_{12}(x_{N/2-1}^s,y_{N-1}^t)(\theta_{N/2-1,N-1}^{s,t})_xv^N\mathrm{d}x\mathrm{d}y,\\
		&\mathcal{I}_7=-\int_{K_a}b_1E_x(x,y)v^N\mathrm{d}x\mathrm{d}y\label{remark-5}.
	\end{align}
	
	Estimates for $\mathcal{I}_1-\mathcal{I}_7$ are discussed in detail below.
	
	From H\"{o}lder inequalities, \eqref{x-direction error estimate}, Assumption \ref{bound} and Lemma \ref{step}, one has
	\begin{equation}\label{begin-1}
		|\mathcal{I}_1|\le CN^{-k}\sum_{t=0}^{k-1}\Vert\frac{\mathrm{d}^{k+1}S(x,y_{N-1}^t)}{\mathrm{d}x^{k+1}}\Vert_{K_a}\Vert v^N\Vert_{K_a}
		\le CN^{-(k+1)}\Vert v^N\Vert_\varepsilon.
	\end{equation}
	
	H\"{o}lder inequalities, \eqref{y-direction error estimate}, Assumption \ref{bound} and Lemma \ref{step} give
	\begin{equation}\label{remark-6}
		|\mathcal{I}_2|
		\le CN^{-(k+1)}\Vert\frac{\mathrm{d}^{k+1}S_x(x,y)}{\mathrm{d}y^{k+1}}\Vert_{K_a}\Vert v^N\Vert_{K_a}
		\le CN^{-(k+2)}\Vert v^N\Vert_\varepsilon.
	\end{equation}
	
	Green's formula yields
	\begin{align*}
		\mathcal{I}_3=&\int_{K_a}((b_1)_xv^N+b_1v_x^N)\theta_{N}^0(y)S(x,y_N^0)\mathrm{d}x\mathrm{d}y-\int_{y_{N-1}}^{y_N}b_1\theta_{N}^0(y)S(x_{N/2},y_N^0)v^N(x_{N/2},y)\mathrm{d}y\\
		& +\int_{y_{N-1}}^{y_N}b_1\theta_{N}^0(y)S(x_{N/2-1},y_N^0)v^N(x_{N/2-1},y)\mathrm{d}y.
	\end{align*}
	Since $u=0$ on the boundary $\partial\Omega$, the smooth function $S(x,y_N^0)$ can be seen as the layer function $-E(x,y_N^0)$, which holds
	\begin{equation*}
		\Vert S(x,y_N^0)\Vert_{K_a}=\Vert E(x,y_N^0)\Vert_{K_a}\le C\varepsilon^{1/2}N^{-\sigma}.
	\end{equation*}
	Thus,
	\begin{equation}\label{remark-7}
		|\int_{K_a}((b_1)_xv^N+b_1v_x^N)\theta_{N}^0(y)S(x,y_N^0)\mathrm{d}x\mathrm{d}y|\le C\Vert S(x,y_N^0)\Vert_{K_a}\Vert v_x\Vert_{K_a}\le CN^{-\sigma}\Vert v^N\Vert_\varepsilon.
	\end{equation}
	Besides,
	\begin{equation}\label{remark-8}
		\begin{aligned}
			&|\int_{y_{N-1}}^{y_N}b_1\theta_{N}^0(y)S(x_{N/2},y_N^0)v^N(x_{N/2},y)\mathrm{d}y|+|\int_{y_{N-1}}^{y_N}b_1\theta_{N}^0(y)S(x_{N/2-1},y_N^0)v^N(x_{N/2-1},y)\mathrm{d}y|\\
			\le& C\varepsilon^{\sigma}\Vert v^N\Vert_\varepsilon+CN^{-\sigma-1/2}\ln^{1/2}N\Vert v^N\Vert_\varepsilon,
		\end{aligned}
	\end{equation}
	where we have used $|S(x_{N/2},y_N^0)|=|-E(x_{N/2},y_N^0)|\le C\varepsilon^\sigma$, $|S(x_{N/2-1},y_N^0)|=|-E(x_{N/2-1},y_N^0)|\le CN^{-\sigma}$ and 
	\begin{equation}\label{partrial method}
		\begin{aligned}
			&\begin{aligned}
				|\int_{y_{N-1}}^{y_N}v^N(x_{N/2},y)\mathrm{d}y|\le CN\Vert v^N\Vert_{L^1(K_{N/2,N-1})}\le C\Vert v^N\Vert_\varepsilon,
			\end{aligned}\\
			&\begin{aligned}
				|\int_{y_{N-1}}^{y_N}v^N(x_{N/2-1},y)\mathrm{d}y|&\le C|\int_{y_{N-1}}^{y_N}\int_{x_0}^{x_{N/2-1}}v_x^N(x,y)\mathrm{d}x\mathrm{d}y|\\
				&\le C \Vert v_x^N\Vert_{L^1([x_0,x_{N/2-1}]\times[y_{N-1},y_N])}\\
				&\le CN^{-1/2}\ln^{1/2}N\Vert v^N\Vert_\varepsilon.
			\end{aligned}
		\end{aligned}
	\end{equation} 
\eqref{remark-7}-\eqref{remark-8} give 
	\begin{equation}\label{remark-9}
		|\mathcal{I}_3|\le CN^{-\sigma}\Vert v^N\Vert_\varepsilon.
	\end{equation}
	
	By $|E(x_{N/2},y_{N-1}^t)|\le C\varepsilon^{\sigma}$, $|E_2(x_{N/2-1}^s,y_{N-1}^t)|\le C\varepsilon^\sigma$, $|E_{12}(x_{N/2-1}^s,y_{N-1}^t)|\le C\varepsilon^\sigma N^{-\sigma}$, inverse inequalities and Lemma \ref{step}, we have
	\begin{align}
		&|\mathcal{I}_4|\le C\varepsilon^{\sigma}(h_{x,N/2-1}^{-1}h_{x,N/2-1}^{1/2}h_{y,N-1}^{1/2})\Vert v^N\Vert_{K_a}\le C\varepsilon^{\sigma-1/2}N^{-1/2}\Vert v^N\Vert_\varepsilon,\label{remark-1}\\
		&|\mathcal{I}_5|\le C\varepsilon^\sigma (h_{x,N/2-1}^{-1}h_{x,N/2-1}^{1/2}h_{y,N-1}^{1/2})\Vert v^N\Vert_{K_a}\le C\varepsilon^{\sigma-1/2}N^{-1/2}\Vert v^N\Vert_\varepsilon,\\
		&|\mathcal{I}_6|\le C\varepsilon^\sigma N^{-\sigma} (h_{x,N/2-1}^{-1}h_{x,N/2-1}^{1/2}h_{y,N-1}^{1/2})\Vert v^N\Vert_{K_a}\le
		C\varepsilon^{\sigma-1/2}N^{-\sigma-1/2}\Vert v^N\Vert_\varepsilon\label{remark-11}.
	\end{align}
	
	Green's formula yields
	\begin{align*}
		\mathcal{I}_7=&\int_{K_a}((b_1)_xv^N+b_1v_x^N)E(x,y)\mathrm{d}x\mathrm{d}y-\int_{y_{N-1}}^{y_N}b_1E(x_{N/2},y)v^N(x_{N/2},y)\mathrm{d}y\\
		&+\int_{y_{N-1}}^{y_N}b_1E(x_{N/2-1},y)v^N(x_{N/2-1},y)\mathrm{d}y,
	\end{align*}
	where
	\begin{equation}\label{remark-10}
		|\int_{K_a}((b_1)_xv^N+b_1v_x^N)E(x,y)\mathrm{d}x\mathrm{d}y|\le CN^{-\sigma}\Vert v^N\Vert_\varepsilon,
	\end{equation}
	and 
	\begin{equation}\label{finish-1}
		\begin{aligned}
			&|\int_{y_{N-1}}^{y_N}b_1E(x_{N/2},y)v^N(x_{N/2},y)\mathrm{d}y|+|\int_{y_{N-1}}^{y_N}b_1E(x_{N/2-1},y)v^N(x_{N/2-1},y)\mathrm{d}y|\\
			\le&C\varepsilon^\sigma|\int_{y_{N-1}}^{y_N}v^N(x_{N/2},y)\mathrm{d}y|+CN^{-\sigma}|\int_{y_{N-1}}^{y_N}v^N(x_{N/2-1},y)\mathrm{d}y|\\
			\le& C\varepsilon^{\sigma}\Vert v^N\Vert_\varepsilon+CN^{-\sigma-1/2}\ln^{1/2}N\Vert v^N\Vert_\varepsilon.
		\end{aligned}
	\end{equation}
	\eqref{remark-10}-\eqref{finish-1} give
	\begin{equation}\label{remark-12}
		|\mathcal{I}_7|\le CN^{-\sigma}\Vert v^N\Vert_\varepsilon.
	\end{equation}
	
	Substituting \eqref{begin-1}-\eqref{remark-6}, \eqref{remark-9}, \eqref{remark-1}-\eqref{remark-11} and \eqref{remark-12} into \eqref{initial}, we prove
	\begin{equation}\label{aa-2}
		|\int_{K_a}b_1(\Pi u-u)_xv^N\mathrm{d}x\mathrm{d}y|\le CN^{-(k+1)}\Vert v^N\Vert_\varepsilon+C\varepsilon^{\sigma-1/2}N^{-1/2}\Vert v^N\Vert_\varepsilon.
	\end{equation}
	
	Next, we are going to deal with $\int_{K_a}b_2(\Pi u-u)_yv^N\mathrm{d}x\mathrm{d}y$, which can be divided into 
	\begin{equation}\label{aa-1}
		\begin{aligned}
			&\int_{K_a}b_2(\Pi u-u)_yv^N\mathrm{d}x\mathrm{d}y\\
			=&\int_{K_a}b_2(S^I-S)_yv^N\mathrm{d}x\mathrm{d}y+\int_{K_a}b_2(\pi_1E_1-E_1)_yv^N\mathrm{d}x\mathrm{d}y+\int_{K_a}b_2(E_2^I-E_2)_yv^N\mathrm{d}x\mathrm{d}y\\
			&+\int_{K_a}b_2(E_{12}^I-E_{12})_yv^N\mathrm{d}x\mathrm{d}y.
		\end{aligned}
	\end{equation}
	
	\begin{equation}\label{begin-2}
		|\int_{K_a}b_2(S^I-S)_yv^N\mathrm{d}x\mathrm{d}y|\le CN^{-(k+1)}\Vert v^N\Vert_\varepsilon,
	\end{equation}
	is straightforward by H\"{o}lder inequalities, Lemma \ref{interpolation errors}, Assumption \ref{bound} and Lemma \ref{step}. 
	
	From H\"{o}lder inequalities, inverse inequalities, Lemma \ref{step} and \eqref{max-2}, we have
	\begin{equation}
		|\int_{K_a}b_2(\pi_1E_1-E_1)_yv^N\mathrm{d}x\mathrm{d}y|\le CN\Vert E_1\Vert_{\infty,K_a}\Vert v^N\Vert_{L^1(K_a)}\le CN^{-\sigma}\Vert v^N\Vert_\varepsilon.
	\end{equation}
	
	Under a similar analysis as $\int_{K_a}b_2(\pi_1E_1-E_1)_yv^N\mathrm{d}x\mathrm{d}y$, we can get
	\begin{align}
		&|\int_{K_a}b_2(E_2^I-E_2)_yv^N\mathrm{d}x\mathrm{d}y|\le C\varepsilon^{\sigma}\Vert v^N\Vert_\varepsilon,\\
		&|\int_{K_a}b_2(E_{12}^I-E_{12})_yv^N\mathrm{d}x\mathrm{d}y|\le C\varepsilon^{\sigma}N^{-\sigma}\Vert v^N\Vert_\varepsilon.\label{finish-2}
	\end{align}
	
	Substituting \eqref{begin-2}-\eqref{finish-2} into \eqref{aa-1}, we prove
	\begin{equation}\label{finish-3}
		|\int_{K_a}b_2(\Pi u-u)_yv^N\mathrm{d}x\mathrm{d}y|\le CN^{-(k+1)}\Vert v^N\Vert_\varepsilon.
	\end{equation}

	In conclusion, combined \eqref{aa-2} with \eqref{finish-3}, we obtain
	\begin{equation}
		|\uppercase\expandafter{\romannumeral3}|=|-\int_{K_a}\boldsymbol{b}\cdot\nabla (\Pi u-u)v^N\mathrm{d}x\mathrm{d}y|\le CN^{-(k+1)}\Vert v^N\Vert_\varepsilon+C\varepsilon^{\sigma-1/2}N^{-1/2}\Vert v^N\Vert_\varepsilon.
	\end{equation}
	
	Derivations of \uppercase\expandafter{\romannumeral4} is similar to that of \uppercase\expandafter{\romannumeral3}, thus we have
	\begin{equation}
		|\uppercase\expandafter{\romannumeral4}|=|-\int_{K_b}\boldsymbol{b}\cdot\nabla (\Pi u-u)v^N\mathrm{d}x\mathrm{d}y|\le CN^{-(k+1)}\Vert v^N\Vert_\varepsilon+C\varepsilon^{\sigma-1/2}N^{-1/2}\Vert v^N\Vert_\varepsilon.
	\end{equation}
\end{proof}
\begin{remark}
	We briefly summarize the novel analysis technique. Firstly, the specific form of the interpolation $\Pi u$ on $K_a$ is described. Then, decompose $\int_{K_a}b_1(\Pi u-u)_xv^N\mathrm{d}x\mathrm{d}y$ as the smooth part $\mathcal{B}_S$ and the layer part $\mathcal{B}_E$. For $\mathcal{B}_S$, we convert the corresponding estimates in the two-dimensional case into several types of one-dimensional interpolation error estimates, see \eqref{remark-2}-\eqref{remark-22} and \eqref{begin-1}-\eqref{remark-6}. Furthermore, we take full advantage of the boundary condition where the sum of the smooth function and the layer function is zero, so that analysis of the smooth function on the boundary can be transformed into studying the layer function, see \eqref{remark-3} and \eqref{remark-7}-\eqref{remark-8}.
	For $\mathcal{B}_E$, we employ the standard error estimats, see \eqref{remark-4}-\eqref{remark-5} and \eqref{remark-1}-\eqref{remark-12}.
\end{remark}

\begin{lemma}\label{main-3}
	Let Assumptions \ref{bound} and \ref{restriction} hold. Let $\sigma\ge k+1$. Then we have
	\begin{equation*}
		|\uppercase\expandafter{\romannumeral5}+\uppercase\expandafter{\romannumeral6}|\le CN^{-k}\Vert v^N\Vert_\varepsilon.
	\end{equation*}
\end{lemma}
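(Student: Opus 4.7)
The plan is to handle $\uppercase\expandafter{\romannumeral6}$ by direct Cauchy--Schwarz, and to handle $\uppercase\expandafter{\romannumeral5}$ by a Green's-formula reduction that lets me use only the $L^2$ interpolation bounds already recorded in \eqref{E1-1}--\eqref{E12-1}. The essential point is that $\Omega_0$ excludes $K_a$ and $K_b$, so the intractable convection term studied in Lemma \ref{main-2} does not reappear, and every mesh rectangle of $\Omega_0$ satisfies both $h_{x,i} \le CN^{-1}$ and $h_{y,j}\le CN^{-1}$ (by Lemma \ref{step}).

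For $\uppercase\expandafter{\romannumeral6}$ I would apply H\"older's inequality and Lemma \ref{interpolation errors} rectangle by rectangle. Since Assumption \ref{bound} bounds all derivatives of $S$ of order $\le k+1$ by $C$, and since both mesh sizes are $\le CN^{-1}$ on $\Omega_0$, summing the local estimates gives $|S^I-S|_{1,\Omega_0}\le CN^{-k}$, hence $|\uppercase\expandafter{\romannumeral6}|\le CN^{-k}\Vert v^N\Vert_\varepsilon$.

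For $\uppercase\expandafter{\romannumeral5}$ I would treat $E_1$ as the model case (the $E_2$ contribution is symmetric and $E_{12}$ is smaller thanks to its double exponential factor). Using Green's formula on $\Omega_0$ to transfer the $x$-derivative off $\pi_1 E_1-E_1$,
\begin{equation*}
\int_{\Omega_0} b_1(\pi_1 E_1-E_1)_x v^N\,\mathrm{d}x\mathrm{d}y = -\int_{\Omega_0}\bigl((b_1)_x v^N + b_1 v^N_x\bigr)(\pi_1 E_1-E_1)\,\mathrm{d}x\mathrm{d}y + \mathcal{R},
\end{equation*}
the interior integral is controlled by Cauchy--Schwarz, the bound $\Vert \pi_1 E_1-E_1\Vert\le CN^{-(k+1)}$ coming from \eqref{E1-1}, and the inverse relation $\Vert v^N_x\Vert\le C\varepsilon^{-1/2}\Vert v^N\Vert_\varepsilon$; combined with $\varepsilon^{-1/2}\le N^{1/2}$ from Assumption \ref{restriction}, this yields $CN^{-k-1/2}\Vert v^N\Vert_\varepsilon$. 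The boundary remainder $\mathcal{R}$ lives on $\partial\Omega_0$ (including the interfaces with $K_a$ and $K_b$ and with $\partial\Omega$), and on each piece $|\pi_1 E_1-E_1|$ inherits one of the smallness factors $\varepsilon^\sigma$ or $N^{-\sigma}$ from the exponential decay of $E_1$ past $x=x_{N/2-1}$ and from the node choice in \eqref{new interpolation-1}; using a one-dimensional trick analogous to \eqref{partrial method} absorbs any $\varepsilon^{-1/2}$ factor into $\Vert v^N\Vert_\varepsilon$, leaving $CN^{-\sigma}\Vert v^N\Vert_\varepsilon$. For the $b_2(\pi_1 E_1-E_1)_y$ piece the argument is easier, because the pointwise bound on $(E_1)_y$ from Assumption \ref{bound} carries no $\varepsilon^{-1}$.

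The step I expect to be the main obstacle is the boundary remainder $\mathcal{R}$ along the interfaces $x=x_{N/2}$ and $y=y_{N/2}$: these are exactly the edges at which $\Omega_0$ is glued to $K_a$ and $K_b$, where the mesh is most anisotropic, so a careless trace estimate would produce a spurious $\varepsilon^{-1/2}$ factor and only $N^{-k+1/2}$. The way out is to keep the layer factor $|E_1(x_{N/2},\cdot)|\le C\varepsilon^\sigma$ (respectively $|E_1(x_{N/2-1},\cdot)|\le CN^{-\sigma}$) paired with the $L^1$-in-$y$ bound of $v^N$ obtained by integration in $x$ from the Dirichlet boundary, exactly as in \eqref{partrial method}. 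Once this is done and the symmetric estimates for $E_2$ and $E_{12}$ are added, the total is $CN^{-k}\Vert v^N\Vert_\varepsilon$, which combined with the $\uppercase\expandafter{\romannumeral6}$ estimate gives the lemma.
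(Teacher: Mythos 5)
Your plan for $\uppercase\expandafter{\romannumeral6}$ and for the boundary remainder $\mathcal R$ is sound and matches the paper's reasoning, but the estimate you propose for the interior term of $\uppercase\expandafter{\romannumeral5}$ contains a fatal sign error. You write ``combined with $\varepsilon^{-1/2}\le N^{1/2}$ from Assumption \ref{restriction}, this yields $CN^{-k-1/2}\Vert v^N\Vert_\varepsilon$''. But Assumption \ref{restriction} says $\varepsilon\le N^{-1}$, which gives $\varepsilon^{-1/2}\ge N^{1/2}$, not $\le$. Thus Cauchy--Schwarz paired with the crude global inverse estimate $\Vert v^N_x\Vert\le\varepsilon^{-1/2}\Vert v^N\Vert_\varepsilon$ only gives
\begin{equation*}
\bigl|\int_{\Omega_0}(\pi_1E_1-E_1)\,b_1\,v^N_x\bigr|
\;\le\; C N^{-(k+1)}\,\varepsilon^{-1/2}\Vert v^N\Vert_\varepsilon,
\end{equation*}
and $N^{-(k+1)}\varepsilon^{-1/2}$ is unbounded as $\varepsilon\to0$ with $N$ fixed (take $\varepsilon=N^{-100}$, say). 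So this step does not establish uniform convergence, and the rest of your argument for $\uppercase\expandafter{\romannumeral5}$ collapses.

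The genuine difficulty here is precisely that a global $L^2$ bound on $\pi_1E_1-E_1$ cannot be combined with a global factor of $\varepsilon^{-1/2}$. The paper does not attempt this: for the term $\int_{\Omega_0}(\pi_1E_1-E_1)(\boldsymbol b\cdot\nabla v^N)$ it invokes the detailed local analysis of \cite[Lemma 4.1]{zhang1Liu2:2023-Convergence}, which partitions $\Omega_0$ into the layer region, the transition strip containing $i=N/2-1$, and the coarse region, and pairs a \emph{local} inverse estimate $\Vert v^N_x\Vert_K\le Ch_{x,i}^{-1}\Vert v^N\Vert_K$ with the correspondingly small local interpolation error. In the layer rows the balance $h_{x,i}^{k+1}e^{-\beta_1x/\varepsilon}\le C\varepsilon^{k+1}N^{-(k+1)}$ from Lemma \ref{step} cancels the $\varepsilon^{-(k+1)}$ coming from Assumption \ref{bound}, and a residual $h_{x,i}^{-1}\le C\varepsilon^{-1}$ is absorbed because the same row contributes at most $h_{x,i}$ in the $x$-measure. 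This row-by-row bookkeeping is what your proposal is missing; once it is supplied (or cited) the rest of your sketch, including the treatment of $\uppercase\expandafter{\romannumeral6}$ and of the interface integrals via \eqref{partrial method}, agrees with the paper.
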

\begin{proof}
	To begin with, we discuss term $\int_{\Omega_0}\boldsymbol{b}\cdot\nabla(\pi_1E_1-E_1)v^N\mathrm{d}x\mathrm{d}y$. Green's formula generates
	\begin{equation}\label{bb-1}
		\begin{aligned}
			&\int_{\Omega_0}\boldsymbol{b}\cdot\nabla(\pi_1E_1-E_1)v^N\mathrm{d}x\mathrm{d}y\\
			=&-\int_{\Omega_0}(\nabla\cdot\boldsymbol{b})(\pi_1E_1-E_1)v^N\mathrm{d}x\mathrm{d}y-\int_{\Omega_0}(\pi_1E_1-E_1)(\boldsymbol{b}\cdot\nabla v^N)\mathrm{d}x\mathrm{d}y\\
			&+\int_{\partial\Omega_0}(\boldsymbol{b}\cdot \boldsymbol{n})(\pi_1E_1-E_1)v^N\mathrm{d}s.
		\end{aligned}
	\end{equation}
	
	Employing H\"{o}lder inequalities and \eqref{E1-1}, we have
	\begin{equation}\label{qq-1}
		|-\int_{\Omega_0}(\nabla\cdot\boldsymbol{b})(\pi_1E_1-E_1)v^N\mathrm{d}x\mathrm{d}y|\le CN^{-(k+1)}\Vert v^N\Vert_\varepsilon.
	\end{equation}
	
	The reader is referred to \cite[Lemma 4.1]{zhang1Liu2:2023-Convergence} for detailed derivations of $\int_{\Omega_0}(\pi_1E_1-E_1)(\boldsymbol{b}\cdot\nabla v^N)\mathrm{d}x\mathrm{d}y$, from which one can get
	\begin{equation}\label{bb-4}
		|\int_{\Omega_0}(\pi_1E_1-E_1)(\boldsymbol{b}\cdot\nabla v^N)\mathrm{d}x\mathrm{d}y|\le CN^{-(k+1/2)}\Vert v^N\Vert_\varepsilon.
	\end{equation}
	
	Then take $\int_{\partial\Omega_0}(\boldsymbol{b}\cdot \boldsymbol{n})(\pi_1E_1-E_1)v^N\mathrm{d}s$ into consideration:
	
	\begin{equation}\label{qq-5}
		\begin{aligned}
			&|\int_{\partial\Omega_0}(\boldsymbol{b}\cdot \boldsymbol{n})(\pi_1E_1-E_1)v^N\mathrm{d}s|\\
			\le&C\Vert \pi_1E_1-E_1\Vert_{L^\infty(\partial\Omega_0\textbackslash\partial\Omega)}\Vert v^N\Vert_{L^1(\partial\Omega_0\textbackslash\partial\Omega)}\\
			\le&C\Vert E_1\Vert_{L^\infty(\partial\Omega_0\textbackslash\partial\Omega)}\Vert v^N\Vert_{L^1(\partial\Omega_0\textbackslash\partial\Omega)}\\
			\le&CN^{-\sigma}\left(|\int_{y_{N/2-1}}^{y_{N/2}}v^N(x_{N-1},y)\mathrm{d}y|+|\int_{y_{N-1}}^1v^N(x_{N/2},y)\mathrm{d}y|+|\int_{y_{N-1}}^1v^N(x_{N/2-1},y)\mathrm{d}y|\right)\\
			&+CN^{-\sigma}\left(|\int_{x_{N/2-1}}^{x_{N/2}}v^N(x,y_{N-1})\mathrm{d}x|+|\int_{x_{N-1}}^1v^N(x,y_{N/2})\mathrm{d}x|+|\int_{x_{N-1}}^1v^N(x,y_{N/2-1})\mathrm{d}x|\right)\\
			\le&CN^{-\sigma}\Vert v^N\Vert_\varepsilon,
		\end{aligned}
	\end{equation}
	here we have used H\"{o}ler inequalities, \eqref{max-2}, Lemma \ref{step} and \eqref{partrial method}.
	
	From \eqref{qq-1}-\eqref{qq-5}, one can get
	\begin{equation}\label{qq-6}
		|\int_{\Omega_0}\boldsymbol{b}\cdot\nabla(\pi_1E_1-E_1)v^N\mathrm{d}x\mathrm{d}y|\le CN^{-(k+1/2)}\Vert v^N\Vert_\varepsilon.
	\end{equation}
	
	Similarly, 
	\begin{equation}\label{qq-7}
		|\int_{\Omega_0}\boldsymbol{b}\cdot\nabla(\pi_2E_2-E_2)v^N\mathrm{d}x\mathrm{d}y|\le CN^{-(k+1/2)}\Vert v^N\Vert_\varepsilon.
	\end{equation}
	
	Next, we study $\int_{\Omega_0}\boldsymbol{b}\cdot\nabla(\pi_{12}E_{12}-E_{12})v^N\mathrm{d}x\mathrm{d}y$, which can be divided into the follwing terms using Green's formula:
	\begin{equation}\label{yy-1}
		\begin{aligned}
			&\int_{\Omega_0}\boldsymbol{b}\cdot\nabla(\pi_{12}E_{12}-E_{12})v^N\mathrm{d}x\mathrm{d}y\\
			=&-\int_{\Omega_0}(\nabla\cdot\boldsymbol{b})(\pi_{12}E_{12}-E_{12})v^N\mathrm{d}x\mathrm{d}y-\int_{\Omega_0}(\pi_{12}E_{12}-E_{12})(\boldsymbol{b}\cdot\nabla v^N)\mathrm{d}x\mathrm{d}y\\
			&+\int_{\partial\Omega_0}(\boldsymbol{b}\cdot \boldsymbol{n})(\pi_{12}E_{12}-E_{12})v^N\mathrm{d}s.
		\end{aligned}
	\end{equation}
	
	According to H\"{o}lder inequalities and \eqref{E12-1}, one obtains
	\begin{equation}\label{yy-2}
		|-\int_{\Omega_0}(\nabla\cdot\boldsymbol{b})(\pi_{12}E_{12}-E_{12})v^N\mathrm{d}x\mathrm{d}y|\le (C\varepsilon N^{-k}+C\varepsilon^{1/2}N^{-(k+1)}+CN^{-(2\sigma+1)})\Vert v^N\Vert_\varepsilon.
	\end{equation}
	
	Refer to \cite[Lemma 4.2]{zhang1Liu2:2023-Convergence} for detailed demonstrations of the second term in the right-hand side of \eqref{yy-1}:
	\begin{equation}
		|\int_{\Omega_0}(\pi_{12}E_{12}-E_{12})(\boldsymbol{b}\cdot\nabla v^N)\mathrm{d}x\mathrm{d}y|\le CN^{-(k+1/2)}\Vert v^N\Vert_\varepsilon.
	\end{equation}
	
	With an approach similar to \eqref{qq-5}, we can derive that
	\begin{equation}\label{yy-3}
		|\int_{\partial\Omega_0}(\boldsymbol{b}\cdot \boldsymbol{n})(\pi_{12}E_{12}-E_{12})v^N\mathrm{d}s|\le CN^{-\sigma}\Vert v^N\Vert_\varepsilon.
	\end{equation}
	
	Substituting \eqref{yy-2}-\eqref{yy-3} into \eqref{yy-1}, one can obtain
	\begin{equation}\label{yy-4}
		|\int_{\Omega_0}\boldsymbol{b}\cdot\nabla(\pi_{12}E_{12}-E_{12})v^N\mathrm{d}x\mathrm{d}y|\le CN^{-(k+1/2)}\Vert v^N\Vert_\varepsilon.
	\end{equation}
	
	\eqref{yy-4}, combined with \eqref{qq-6} and \eqref{qq-7}, proves
	\begin{equation}
		|\uppercase\expandafter{\romannumeral5}|=|-\sum_{i=1,2,12}\int_{\Omega_0}\boldsymbol{b}\cdot\nabla(\pi_iE_i-E_i) v^N\mathrm{d}x\mathrm{d}y|\le CN^{-(k+1/2)}\Vert v^N\Vert_\varepsilon.
	\end{equation}
	
	At last, H\"{o}lder inequalities, Lemma \ref{interpolation errors}, Lemma \ref{step} and Assumption \ref{bound} yield
	\begin{equation}
		|\uppercase\expandafter{\romannumeral6}|=|-\int_{\Omega_0}\boldsymbol{b}\cdot\nabla(S^I-S) v^N\mathrm{d}x\mathrm{d}y|\le C\Vert\nabla(S^I-S)\Vert_{\Omega_0}\Vert v^N\Vert_{\Omega_0}\le CN^{-k}\Vert v^N\Vert_\varepsilon.
	\end{equation}
	
	Thus we are done.
\end{proof}

Now we are in a position to convey the main conclusion of this article.
\begin{theorem}\label{main theorem}
	Let Assumptions \ref{bound} and \ref{restriction} hold. Let $u$ denote the exact solution to \eqref{model problem} and $u^N$ denote the corresponding finite element solution to \eqref{variational form 2}. Then we have
	\begin{equation*}
		\Vert u-u^N\Vert_\varepsilon\le CN^{-k}.
	\end{equation*}
\end{theorem}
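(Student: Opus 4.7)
The plan is to prove the theorem via the standard triangle inequality
\[
\Vert u - u^N\Vert_\varepsilon \le \Vert u - \Pi u\Vert_\varepsilon + \Vert \Pi u - u^N\Vert_\varepsilon,
\]
and to handle the two terms on the right-hand side separately. The first is already controlled by \eqref{u-2}, which was established inside the proof of Lemma \ref{main-1} and gives $\Vert \Pi u - u\Vert_\varepsilon \le CN^{-k}$. The bulk of the argument is therefore devoted to bounding $\Vert v^N\Vert_\varepsilon$ with $v^N := \Pi u - u^N \in V^N$.

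For $v^N$ I will invoke the coercivity \eqref{coercivity} together with the Galerkin orthogonality \eqref{orthogonality} in the usual way to obtain
\[
C\Vert v^N\Vert_\varepsilon^2 \le a(v^N, v^N) = a(\Pi u - u, v^N),
\]
and then exploit the six-term decomposition $a(\Pi u - u, v^N) = \mathrm{I}+\mathrm{II}+\mathrm{III}+\mathrm{IV}+\mathrm{V}+\mathrm{VI}$ displayed just before Lemma \ref{main-1}. Summing the bounds from Lemmas \ref{main-1}, \ref{main-2} and \ref{main-3} produces
\[
|a(\Pi u - u, v^N)| \le CN^{-k}\Vert v^N\Vert_\varepsilon + C\varepsilon^{\sigma - 1/2}N^{-1/2}\Vert v^N\Vert_\varepsilon,
\]
so that dividing through by $\Vert v^N\Vert_\varepsilon$ yields an estimate on $\Vert v^N\Vert_\varepsilon$, provided the anomalous second term can be absorbed.

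The remaining bookkeeping step is precisely to control the contribution $C\varepsilon^{\sigma-1/2}N^{-1/2}$ coming from Lemma \ref{main-2}. Using Assumption \ref{restriction} ($\varepsilon \le N^{-1}$) together with the standing hypothesis $\sigma \ge k+1$, I obtain
\[
\varepsilon^{\sigma - 1/2} N^{-1/2} \le N^{-(\sigma - 1/2)} N^{-1/2} = N^{-\sigma} \le N^{-(k+1)} \le N^{-k},
\]
so this term is dominated by $CN^{-k}$ and one concludes $\Vert v^N\Vert_\varepsilon \le CN^{-k}$. Combining with the interpolation bound $\Vert u - \Pi u\Vert_\varepsilon \le CN^{-k}$ finishes the proof. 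Since all the serious analytic work has been packaged into the three preceding lemmas, the only conceptual hurdle at this stage is this last absorption; the subtlety lies in the fact that without the combined choices $\sigma\ge k+1$ and $\varepsilon \le N^{-1}$, the boundary-layer term surviving from Lemma \ref{main-2} would not be of the desired order and the optimal rate would be lost.
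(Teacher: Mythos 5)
Your proof is correct and follows essentially the same route as the paper: triangle inequality, the interpolation bound \eqref{u-2}, coercivity plus Galerkin orthogonality, and the three lemmas. You are in fact slightly more explicit than the paper in spelling out that the surviving term $C\varepsilon^{\sigma-1/2}N^{-1/2}$ from Lemma \ref{main-2} is dominated by $CN^{-k}$ via $\varepsilon\le N^{-1}$ and $\sigma\ge k+1$, a step the paper leaves implicit when it simply states $\Vert\Pi u-u^N\Vert_\varepsilon\le CN^{-k}$.
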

\begin{proof}
	Triangle inequality generates
	\begin{equation*}
		\Vert u-u^N\Vert_\varepsilon\le\Vert u-\Pi u\Vert_\varepsilon+\Vert \Pi u-u^N\Vert_\varepsilon,
	\end{equation*}
	where from \eqref{u-2}, one has
	\begin{equation}
		\Vert u-\Pi u\Vert_\varepsilon\le CN^{-k},
	\end{equation}
	and from Lemmas \ref{main-1}, \ref{main-2} and \ref{main-3}, one has
	\begin{equation}
		\Vert \Pi u-u^N\Vert_\varepsilon\le CN^{-k}.
	\end{equation}
	
	Thus we are done.
\end{proof}
\section{Numerical experiments}\label{sec 5}
Numerical experiments are carried out in this section to confirm our theoretical results. All calculations were performed using Intel Visual Fortran 11, and discrete problems were solved with the aid of the nonsymmetric iterative solver GMRES; see, e.g., \cite{Ben1Gol2:2005-Numerical}.

Consider the following singularly perturbed convection-diffusion equation:
\begin{equation}\label{model problem eg}
	\begin{aligned}
		-\varepsilon\Delta u-(2+x-y)u_x-(2-x+y)u_y+2u=&f\quad&&\text{in}\quad\Omega=(0,1)^2,\\
		u=&0\quad&&\text{on}\quad\partial\Omega,
	\end{aligned}
\end{equation}
where we select an appropriate function $f$ such that
\begin{equation}
	u(x,y)=2\sin(\pi x)(1-e^{-\frac{2x}{\varepsilon}})(1-y)^2(1-e^{-\frac{y}{\varepsilon}})
\end{equation}
is the exact solution to \eqref{model problem}. This solution typically exhibits exponential layers, as stated in Assumption \ref{bound}. 

Errors for $k=1$ are listed in Table \ref{table:1} under the energy norm $\Vert u-u^N\Vert_\varepsilon$, for $\varepsilon=10^{-4}, 10^{-5},\dots,10^{-8}$ and $N=8, 16,32,64,128,256$; Errors for $k=2$ are listed in Table \ref{table:2} under the energy norm $\Vert u-u^N\Vert_\varepsilon$, for $\varepsilon=10^{-4}, 10^{-5},\dots,10^{-8}$ and $N=8,16,32,64,128$. These data indicate that the finite element solution converges uniformly to the exact solution at an optimal rate of order $k+1$ under the energy norm, confirming our main conclusion, i.e., Theorem \ref{main theorem}.

\begin{table}[H]
	\caption{Errors of $\Vert u-u^N\Vert_{\varepsilon}$ and convergence order for $k=1$}
	\footnotesize
	\begin{tabular*}{\textwidth}{@{}@{\extracolsep{\fill}} c cccccc @{}}
		\cline{1-7}{}
		{ $\varepsilon$ }&\multicolumn{6}{c}{$N$ }\\ 
		\cline{2-7}                 &8        &16         &32        &64       &128        &256           \\
		\cline{1-7}
		{ $10^{-4}$ }&0.339E+00  &0.167E+00    &0.834E-01   &0.418E-01 &0.208E-01   &0.104E-01 \\
		&1.02    &1.00      &1.00      &1.00    &1.00   &---       \\
		\cline{2-7}
		{ $10^{-5}$ }&0.339E+00  &0.167E+00    &0.834E-01   &0.418E-01 &0.208E-01   &0.104E-01 \\
		&1.02    &1.00      &1.00      &1.00    &1.00   &---       \\
		\cline{2-7}
		{ $10^{-6}$ }&0.339E+00  &0.167E+00    &0.834E-01   &0.418E-01 &0.208E-01   &0.104E-01 \\
		&1.02    &1.00      &1.00      &1.00    &1.00   &---       \\
		\cline{2-7}
		{ $10^{-7}$}&0.339E+00  &0.167E+00    &0.834E-01   &0.418E-01 &0.208E-01   &0.104E-01 \\
		&1.02    &1.00      &1.00      &1.00    &1.00   &---       \\
		\cline{2-7}
		{ $10^{-8}$}&0.339E+00  &0.167E+00    &0.834E-01   &0.418E-01 &0.208E-01   &0.104E-01 \\
		&1.02    &1.00      &1.00      &1.00    &1.00   &---       \\
		\cline{1-7}{}
	\end{tabular*}
	\label{table:1}
\end{table}

\begin{table}[H]
	\caption{Errors of $\Vert u-u^N\Vert_{\varepsilon}$ and convergence order for $k=2$}
	\footnotesize
	\begin{tabular*}{\textwidth}{@{}@{\extracolsep{\fill}} c ccccc @{}}
		\cline{1-6}{}
		{ $\varepsilon$ }&\multicolumn{5}{c}{$N$ }\\ 
		\cline{2-6}                 &8        &16         &32        &64       &128                \\
		\cline{1-6}
		{ $10^{-4}$ }&0.103E+00  &0.257E-01    &0.643E-02   &0.162E-02 &0.470E-03    \\
		&2.00   &2.00      &1.99     &1.79      &---       \\
		\cline{2-6}
		{ $10^{-5}$ }&0.103E+00  &0.257E-01    &0.643E-02   &0.161E-02 &0.402E-03   \\
		&2.00   &2.00      &2.00     &2.00       &---       \\
		\cline{2-6}
		{ $10^{-6}$ }&0.103E+00  &0.257E-01    &0.643E-02   &0.161E-02 &0.402E-03   \\
		&2.00   &2.00      &2.00     &2.00       &---      \\
		\cline{2-6}
		{ $10^{-7}$}&0.103E+00  &0.257E-01    &0.643E-02   &0.161E-02 &0.402E-03  \\
		&2.00   &2.00      &2.00     &2.00       &---      \\
		\cline{2-6}
		{ $10^{-8}$}&0.103E+00  &0.257E-01    &0.643E-02   &0.161E-02 &0.402E-03  \\
		&2.00   &2.00      &2.00     &2.00       &---     \\
		\cline{1-6}{}
	\end{tabular*}
	\label{table:2}
\end{table}
\section{Conflict of interest statement}
We declare that we have no conflict of interest.
 
%
%
%

\bibliographystyle{spmpsci}      


\end{document}